\documentclass[11pt,a4paper,reqno]{amsart}

\usepackage[T1]{fontenc}
\usepackage[utf8]{inputenc}
\usepackage{amsmath,amssymb,amsfonts,amsthm,mathtools}
\usepackage{enumitem}
\usepackage[margin=1.05in]{geometry}
\usepackage[colorlinks=true,linkcolor=blue,citecolor=blue,urlcolor=blue]{hyperref}
\usepackage{microtype}

\numberwithin{equation}{section}
\newtheorem{theorem}{Theorem}[section]
\newtheorem{lemma}[theorem]{Lemma}
\newtheorem{proposition}[theorem]{Proposition}
\newtheorem{corollary}[theorem]{Corollary}

\theoremstyle{remark}
\newtheorem{remark}[theorem]{Remark}

\newcommand{\dd}{\,\mathrm d}
\newcommand{\1}{\mathbf 1}
\newcommand{\E}{\mathbb E}
\newcommand{\Prob}{\mathbb P}
\newcommand{\cH}{\mathcal H}
\newcommand{\cE}{\mathcal E}
\newcommand{\cC}{\mathcal C}

\renewcommand{\Cap}{\operatorname{Cap}}
\newcommand{\Ceff}{\operatorname{C}_{\mathrm{eff}}}

\allowdisplaybreaks
\title[Sharp integral Fujita criteria on graphs]{Flow Decomposition and Sharp Integral Fujita Criteria on Weighted Graphs}
\author{Qingsong Gu}
\address{School of Mathematics, Nanjing University, Nanjing 210093, P. R. China} \email{qingsonggu@nju.edu.cn}

\author{Lu Hao}
\address{Universit\"{a}t Bielefeld, Fakult\"{a}t f\"{u}r Mathematik, Postfach 100131, D-33501, Bielefeld, Germany}
\email{lhao@math.uni-bielefeld.de}

\author{Xueping Huang}
\address{School of Mathematics and Statistics, Nanjing University of Information Science and Technology,
	Nanjing 210044, P. R. China}
\email{hxp@nuist.edu.cn}
	
\author{Yuhua Sun}
\address{School of Mathematical Sciences and LPMC, Nankai University, 300071
		Tianjin, P. R. China}
\email{sunyuhua@nankai.edu.cn}

\thanks{\noindent
 Q. Gu was supported by the National Natural Science Foundation of China (Grant Nos. 12101303 and 12171354).
 L. Hao was funded by the Deutsche Forschungsgemeinschaft (DFG, German Research Foundation), Project-ID 317210226, SFB 1283.
 X. Huang was supported by
	the National Natural Science Foundation of China (Grant No. 11601238).
 Y. Sun was funded by the National Natural Science Foundation of
	China (Grant No. 12371206) and the Fundamental Research Funds for the Central Universities, No. 050-63263078.}

\subjclass[2020]{Primary 35K58, 35R02; Secondary 31C20, 60J10, 60J45}
\keywords{graph Laplacian, semilinear heat equation, Fujita phenomenon, flow decomposition, volume growth}

\begin{document}

\begin{abstract}
We study the Fujita phenomenon for semilinear heat inequalities generated by
variable-speed Laplacians on infinite weighted graphs.  Assuming that the
graph carries a proper adapted path metric, we establish an integral
volume-growth criterion forcing every nonnegative global classical
supersolution on the open cylinder $(0,\infty)\times V$ to vanish, without
assuming an initial value or trace.  We prove that a nontrivial supersolution
of this kind exists if and only if the equation has a positive global Cauchy
solution for some nonzero point-source datum.  A complementary heat-kernel
construction gives global Cauchy solutions for all sufficiently small
point-source data when the same volume integral converges and a matching
anchored heat-kernel upper bound is available.  This proves sharpness on
integer lattices and on a family of logarithmically perturbed weighted
half-lines; in the latter examples, even the exponent of an iterated logarithm
can determine the existence--nonexistence alternative.  A finer nonexistence
criterion couples intrinsic volume growth with the capacity of intrinsic
annuli.  Its proof combines parabolic testing, a Laplace--resolvent reduction,
and a pathwise decomposition of resolvent currents.  The nonexistence results
require no volume-doubling property, Poincar\'e inequality, heat-kernel bound,
or stochastic completeness.
\end{abstract}

\maketitle
\tableofcontents

\section{Introduction and main results}

The Fujita problem asks for which exponents a nontrivial nonnegative solution of a
semilinear heat equation can exist for all positive times.  Fujita considered
the Cauchy problem
\[
\begin{cases}
        \partial_tu-\Delta u=u^q,
        & (t,x)\in(0,\infty)\times\mathbb R^N,\\
        u(0,x)=u_0(x)\ge0,\quad u_0\not\equiv0,
        & x\in\mathbb R^N,
\end{cases}
\]
He proved that no nontrivial nonnegative initial datum yields a global
solution when $1<q<1+2/N$, whereas sufficiently small initial data do yield
global solutions when $q>1+2/N$, thereby identifying the critical exponent
\[
        q_F=1+\frac2N;
\]
see \cite{Fujita66}.  The borderline case $q=q_F$ was subsequently completed
in \cite{Hayakawa73,KST77}.  Thus, for a fixed dimension $N$, the critical
threshold is expressed in terms of the exponent $q$; dually, for a fixed
$q>1$, the corresponding critical dimension is
\[
        N_F=\frac2{q-1}.
\]

On a complete noncompact Riemannian manifold, a single dimension no longer
captures the relevant large-scale geometry.  A natural substitute is the
volume-growth function
\[
        V_M(r):=\operatorname{Vol}(B(o,r)).
\]
Fujita-type nonexistence results on manifolds have accordingly been formulated
in terms of polynomial and critical polynomial--logarithmic volume bounds;
see, for example, \cite{Zhang99,MMP17}.  Motivated by this geometric viewpoint,
the criterion established below has the same form, with $V_M$ replaced by the
intrinsic weighted-ball volume $M_o$ defined below:
\[
        \int_1^\infty\frac{r}{M_o(r)^{q-1}}\,\dd r=\infty.
\]
When $M_o(r)\asymp r^D$, this condition is equivalent to
$q\le1+2/D$ and therefore recovers the classical Euclidean nonexistence
range.

The purpose of this paper is to establish this nonexistence criterion for a
graph equipped with a positive vertex weight and to verify its sharpness on
natural classes of examples.  The nonexistence theorem is unconditional
beyond the proper adapted-metric assumption. The complementary existence statement is restricted to settings where the additional regularity holds, a condition that is furnished by an anchored heat-kernel upper bound.

A time-domain distinction should be made explicit.  Fujita's original problem
and the graph problem in \cite{LinWu17} are Cauchy problems: the solution is
defined up to $t=0$, an initial datum is prescribed there, and the differential
equation is imposed for $t>0$.  Although no particular datum is fixed in the
very weak graph formulation of \cite{MPS26}, the solution is again defined up
to $t=0$ and its initial value is retained as a boundary term in the weak
formulation.  By contrast, our nonexistence theorem is formulated on the open
cylinder $(0,\infty)\times V$ and assumes neither a value nor a trace at the
initial time.  As regards the initial-time requirement, it is therefore a
trace-free Liouville statement for an \emph{a priori} larger class.  This
distinction does not create a mismatch at the existential level relevant to
the Fujita threshold:
Proposition~\ref{prop:supersolution-solution-equivalence} shows that a
nontrivial global supersolution on the open cylinder exists if and only if
the equation has a
positive global Cauchy solution for some nonzero point-source datum.  Thus the
critical-threshold comparison is exact at this existential level, while the
nonexistence statement itself remains independent of initial data.  This
equivalence does not assert global solvability for every small datum; the
converse via a heat kernel condition below gives the precise small-data class used here.

Let $(V,E)$ be an infinite, connected, locally finite graph.  We allow at most
one edge between two distinct vertices and no loops.  Let
$\mu:V\times V\to[0,\infty)$ be a
symmetric edge weight satisfying
\[
        \mu_{xy}=\mu_{yx}>0\quad\text{if }x\sim y,
        \qquad
        \mu_{xy}=0\quad\text{otherwise}.
\]
For $x\in V$, put
\[
        \mu(x):=\sum_{y\sim x}\mu_{xy},
\]
and, for $A\subset V$, write
\[
        \mu(A):=\sum_{x\in A}\mu(x).
\]
Let
\[
        \Delta f(x)
        :=\frac1{\mu(x)}\sum_{y\sim x}\mu_{xy}\bigl(f(y)-f(x)\bigr)
\]
be the standard normalized graph Laplacian.

Fix a vertex weight $\sigma:V\to(0,\infty)$ and define the reference measure
\[
        \nu(x):=\sigma(x)\mu(x),
        \qquad
        \nu(A):=\sum_{x\in A}\nu(x).
\]
The diffusion operator studied here is
\begin{equation}\label{eq:laplacian}
        \Delta_\sigma f(x)
        :=\frac1{\nu(x)}\sum_{y\sim x}\mu_{xy}\bigl(f(y)-f(x)\bigr)
        =\frac1{\sigma(x)}\Delta f(x).
\end{equation}
Thus the jump rate at $x$ is $\sigma(x)^{-1}$, and $\Delta_\sigma$ is symmetric
with respect to $\nu$.

The choice of the reference measure $\nu=\sigma\mu$, or equivalently the
placement of $\sigma$ in \eqref{eq:laplacian}, determines the linear
evolution.  Indeed, the parabolic inequality considered below can be written
equivalently as
\[
        \sigma\,\partial_tu-\Delta u\ge\sigma u^q.
\]
Its stationary supersolution satisfies
\[
        -\Delta u\ge\sigma u^q,
\]
and is precisely the
weighted Lane--Emden inequality studied in \cite{GHHS26}. 

The geometry appropriate to \eqref{eq:laplacian} is an intrinsic path metric.
A positive symmetric edge-length function $\rho:E\to(0,\infty)$ is called
$\nu$-adapted if
\begin{equation}\label{eq:adapted-weight}
        \sum_{y\sim x}\mu_{xy}\rho(x,y)^2\le\nu(x),
        \qquad x\in V.
\end{equation}
Such a length function always exists; for example,
\[
        \rho(x,y)=\sqrt{\min\{\sigma(x),\sigma(y)\}}
\]
is $\nu$-adapted.  Let $d_\rho$ be the associated path metric,
\[
 d_\rho(x,y)
 :=\inf\left\{\sum_{i=1}^n\rho(x_{i-1},x_i):
 x=x_0\sim x_1\sim\cdots\sim x_n=y\right\}.
\]
Throughout the paper we assume that $d_\rho$ is proper, meaning that every ball
\[
        B_\rho(o,r):=\{x\in V:d_\rho(o,x)\le r\}
\]
is finite.  In the present locally finite intrinsic-path setting, properness is
equivalent to metric completeness.  Finiteness of balls is the exact compactness
input needed in the finite-network argument.  We write
\[
        M_o(r):=\nu\bigl(B_\rho(o,r)\bigr).
\]
Throughout the paper, $c_q>0$ denotes a constant depending only on $q$; its
value may change from line to line.  For nonnegative quantities $A$ and $B$,
we write $A\lesssim B$ if $A\le CB$ for some constant $C>0$, and
$A\asymp B$ if both $A\lesssim B$ and $B\lesssim A$.  In the examples,
the implicit constants may depend on the fixed model parameters.

Let $p_\sigma(t,x,y)$ denote the minimal heat kernel of $\Delta_\sigma$ with
respect to $\nu$; equivalently, it is the monotone limit of the Dirichlet heat
kernels on finite connected exhaustions.  It is symmetric and sub-Markovian:
\[
        p_\sigma(t,x,y)=p_\sigma(t,y,x),
        \qquad
        \sum_y p_\sigma(t,x,y)\nu(y)\le1.
\]
No conservativeness assumption is made.

We denote by $\ell_0(V)$ the space of finitely supported
real-valued functions on $V$. For $f\in \ell_0(V)$, define
\[
\mathcal E_{\mathrm D}(f)
:=
\frac12\sum_{x\in V}\sum_{y\sim x}
\mu_{xy}\bigl(f(x)-f(y)\bigr)^2.
\]
For $\lambda>0$ and a nonempty finite set $A\subset V$, define the
$\lambda$-capacity associated with the reference measure $\nu$ by
\begin{equation}\label{eq:lambda-capacity-definition}
\Cap_\lambda^\nu(A)
:=
\inf\left\{
\mathcal E_{\mathrm D}(f)+\lambda\sum_{x\in V}f(x)^2\nu(x):
 f\in\ell_0(V),\quad f\ge1\text{ on }A
\right\}.
\end{equation}
By truncation, one may equivalently require $f=1$ on $A$ and $0\le f\le1$.
The mass term is $\lambda\nu$, as dictated by the resolvent of
$\Delta_\sigma$.

For $0<r<R$, define the intrinsic annular capacity
\begin{equation}\label{eq:annular-capacity-definition}
\begin{aligned}
\cC_{o,\rho}(r,R)
&:=
\Cap\bigl(B_\rho(o,r),V\setminus B_\rho(o,R)\bigr)\\
&:=
\inf\left\{
\mathcal E_{\mathrm D}(f):
\begin{array}{l}
 f\in\ell_0(V),\\[-2pt]
 f=1\text{ on }B_\rho(o,r),\\[-2pt]
 f=0\text{ on }V\setminus B_\rho(o,R)
\end{array}
\right\}.
\end{aligned}
\end{equation}
We abbreviate
\[
        \cC_{o,\rho}(r):=\cC_{o,\rho}(r,2r).
\]

We use a trace-free notion on the open cylinder in the nonexistence part.
A nonnegative global classical supersolution of
\begin{equation}\label{eq:heat-ineq}
        \partial_tu-\Delta_\sigma u\ge u^q
        \qquad\text{on }(0,\infty)\times V
\end{equation}
is a finite nonnegative function such that $u(\cdot,x)\in C^1((0,\infty))$
for every $x\in V$ and \eqref{eq:heat-ineq} holds pointwise.  A positive global
classical solution of
\begin{equation}\label{eq:heat-equation}
        \partial_tu-\Delta_\sigma u=u^q
        \qquad\text{on }(0,\infty)\times V
\end{equation}
is a finite function $u:(0,\infty)\times V\to(0,\infty)$ such that
$u(\cdot,x)\in C^1((0,\infty))$ for every $x\in V$ and
\eqref{eq:heat-equation} holds pointwise.  Neither notion includes a value or
a trace at $t=0$.

For comparison with the standard Fujita formulation, let
$u_0:V\to[0,\infty)$ be finite-valued.  By a nonnegative global classical
solution of the Cauchy problem
\begin{equation}\label{eq:cauchy-problem}
\begin{cases}
        \partial_tu-\Delta_\sigma u=u^q,
        &t>0,\ x\in V,\\
        u(0,x)=u_0(x),
        &x\in V,
\end{cases}
\end{equation}
we mean a function $u:[0,\infty)\times V\to[0,\infty)$ such that
$u(\cdot,x)\in C([0,\infty))\cap C^1((0,\infty))$ for every $x\in V$ and the
stated equation holds pointwise for $t>0$.  Such a Cauchy solution is called
positive if $u(t,x)>0$ for all $t>0$ and $x\in V$; positivity at $t=0$ is not
required.  Thus a Cauchy solution is defined on the left-closed interval
$[0,\infty)$, but the differential equation is imposed only for $t>0$; the
endpoint $t=0$ carries the initial condition.

Every nonnegative global Cauchy solution restricts to a global classical
supersolution on $(0,\infty)$.  Consequently, every nonexistence theorem below
automatically rules out global Cauchy solutions for all nontrivial
nonnegative initial data.  Proposition~\ref{prop:supersolution-solution-equivalence}
provides the converse implication at the level of existence.

The main parabolic resolvent--capacity estimate is the following.

\begin{theorem}\label{thm:heat-capacity-lower}
Let $q>1$.  There exist a constant $c_q>0$, depending only on $q$, and an
absolute constant $\eta\in(0,1)$ such that, for every $o\in V$,
\begin{align}
\cH_{q,\sigma}(o)
&:=\int_0^\infty\sum_{x\in V}p_\sigma(t,o,x)^q\nu(x)\,\dd t \notag\\
&\ge
c_q\int_0^{\eta^2}\lambda^{q-2}
\int_0^{\eta\lambda^{-1/2}}
 r\,\Cap_\lambda^\nu\bigl(B_\rho(o,r)\bigr)^{1-q}\,\dd r\,\dd\lambda,
\label{eq:heat-resolvent-capacity-lower}\\
&\ge
c_q\int_1^\infty
\frac{r}{M_o(2r)^{q-1}}
\log\!\left(
1+\frac{M_o(2r)}{r^2\cC_{o,\rho}(r)}
\right)\,\dd r.
\label{eq:heat-capacity-volume-lower}
\end{align}
\end{theorem}

The first inequality is the parabolic capacity estimate.  Indeed, for $t>0$
and every nonempty finite set $A$,
\[
t\Cap_{1/t}^{\nu}(A)
=
\inf\left\{
\|f\|_{L^2(\nu)}^2+t\mathcal E_{\mathrm D}(f):
 f\in\ell_0(V),\ f\ge1\text{ on }A
\right\}.
\]
This is the optimal balance between $L^2(\nu)$ mass and Dirichlet energy at
time scale $t$, whereas $M_o(2r)+t\cC_{o,\rho}(r)$ is the explicit upper
bound obtained from an annular cutoff.  The absolute constant $\eta$ is a
harmless buffer ensuring the strict first-exit range
$r<\lambda^{-1/2}$.

The second estimate is obtained by retaining the full Laplace-parameter
integration and testing the $\lambda$-capacity with an intrinsic annular cutoff.
The proof in Section~4 yields the sharper intermediate estimate
\eqref{eq:heat-capacity-volume-phi-lower}.
The intrinsic radial cutoff satisfies
\begin{equation}\label{eq:intro-annular-capacity-cutoff}
        \cC_{o,\rho}(r)\le r^{-2}M_o(2r).
\end{equation}

\begin{theorem}\label{thm:capacity-volume-criterion}
Let $q>1$.  If, for some $o\in V$,
\begin{equation}\label{eq:capacity-volume-condition}
\int_1^\infty
\frac{r}{M_o(2r)^{q-1}}
\log\!\left(
1+\frac{M_o(2r)}{r^2\cC_{o,\rho}(r)}
\right)\,\dd r
=\infty,
\end{equation}
then every nonnegative global classical supersolution of
\eqref{eq:heat-ineq} vanishes identically.
\end{theorem}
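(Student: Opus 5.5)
Theorem~\ref{thm:heat-capacity-lower} reduces the statement to showing that a nontrivial nonnegative global supersolution forces
\[
\cH_{q,\sigma}(o)=\int_0^\infty\sum_x p_\sigma(t,o,x)^q\nu(x)\,\dd t<\infty .
\]
Indeed, hypothesis \eqref{eq:capacity-volume-condition} combined with \eqref{eq:heat-capacity-volume-lower} gives $\cH_{q,\sigma}(o)=\infty$. So the plan is to prove the contrapositive: if $u\ge0$ is a global classical supersolution with $u\not\equiv0$, then $\cH_{q,\sigma}(o)<\infty$ (for this $o$, and hence for every $o$ by connectivity/Harnack-type comparison of $p_\sigma(t,o,\cdot)$ at neighbouring points).

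\textbf{Step 1: strict positivity.} If $u(t_0,x_0)>0$, then $\partial_tu\ge\Delta_\sigma u$ together with the minimum principle on finite sets gives
\[
u(t,x)\ge \sum_y p_\sigma(t-t_0,x,y)\,u(t_0,y)\,\nu(y)\ge c\,p_\sigma(t-t_0,x,x_0)
\]
for $t>t_0$. We compare with Dirichlet heat kernels on a finite exhaustion and pass to the monotone limit. Since the graph is connected and the minimal heat kernel is positive, $u>0$ everywhere for $t>t_0$. After a time shift we may therefore assume $u(t,x)\ge \varepsilon\,p_\sigma(t,x,o)$ on $(0,\infty)\times V$, taking $\varepsilon=u(t_0,o)\nu(o)$.

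\textbf{Step 2: Duhamel iteration.} Again on finite exhaustions $V_n$ with Dirichlet kernel $p^{(n)}$, the supersolution property yields
\[
u(t,x)\ge \int_0^t\sum_y p^{(n)}(t-s,x,y)\,u(s,y)^q\,\nu(y)\,\dd s .
\]
Letting $n\to\infty$ replaces $p^{(n)}$ by $p_\sigma$. Insert the lower bound $u(s,y)\ge\varepsilon p_\sigma(s,y,o)$ and evaluate at $x=o$. We only need finiteness of the inner quantity, so we use the crude bound
\[
p_\sigma(t-s,o,y)\ge p_\sigma(1,o,y)\,e^{-C(t-s)}\dots
\]
This crude bound does not integrate well, so instead I would use the standard Fujita-type ODE argument. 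Test $u$ against $\phi_T(t,x)=p_\sigma(T-t,o,x)$ for $t<T$. For fixed $T$ the quantity
\[
F(t):=\sum_x p_\sigma(T-t,o,x)\,u(t,x)\,\nu(x)
\]
satisfies $F'(t)\ge\sum_x p_\sigma(T-t,o,x)\,u(t,x)^q\,\nu(x)$, justified via the exhaustion and sub-Markovianity. Hölder/Jensen with respect to the sub-probability $p_\sigma(T-t,o,\cdot)\nu$ then gives $F'\ge F^q$. Integrating from $t_1$ to $T$ gives the ODE blow-up bound
\[
F(t_1)\le c_q\,(T-t_1)^{-1/(q-1)}
\]
with no further constraint, which is too weak on its own. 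So the better route is the iteration: from Step 2 at $x=o$,
\[
\infty> u(T,o)\ge \varepsilon^q\int_0^{T}\sum_y p_\sigma(T-s,o,y)\,p_\sigma(s,y,o)^q\,\nu(y)\,\dd s .
\]
To obtain $\int_0^\infty\sum p^q$ one needs $p_\sigma(T-s,o,y)\gtrsim$ something uniform, which is not available. Hence I would instead work on the resolvent side, mirroring the form of \eqref{eq:heat-resolvent-capacity-lower}. Taking the Laplace transform in $T$ of the above inequality with weight $e^{-\lambda T}$ produces
\[
\int_0^\infty e^{-\lambda T}u(T,o)\,\dd T\ge \varepsilon^q\sum_y g_\lambda(o,y)\,\widehat{(p^q)}_\lambda(y),
\]
where $g_\lambda$ is the resolvent kernel. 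The left side is controlled by the ODE bound, which gives $u(T,o)\lesssim T^{-1/(q-1)}$ after averaging, integrated against $e^{-\lambda T}$.

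\textbf{Main obstacle.} The hard step is converting this family of resolvent inequalities in $\lambda$ into finiteness of $\cH_{q,\sigma}(o)$, or directly into finiteness of the middle expression of \eqref{eq:heat-resolvent-capacity-lower}, without any heat-kernel bounds. I expect the intended route is to run the capacity/flow argument of Theorem~\ref{thm:heat-capacity-lower} with $u$ in place of $\varepsilon p_\sigma(\cdot,o,\cdot)$. That is, test the inequality with $e^{-\lambda t}$ times the $\lambda$-equilibrium potential of $B_\rho(o,r)$ and use Hölder to bound
\[
\int_0^\infty e^{-\lambda t}\sum_{B_\rho(o,r)}u^q\,\nu\,\dd t
\]
by $\lambda^{-1}\Cap_\lambda^\nu(B_\rho(o,r))$ times a power. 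Integrating the resulting inequalities over $r<\eta\lambda^{-1/2}$ and $\lambda<\eta^2$ against the weights $\lambda^{q-2}r$ then produces a finite upper bound for the double integral in \eqref{eq:heat-resolvent-capacity-lower}, contradicting \eqref{eq:capacity-volume-condition} via \eqref{eq:heat-capacity-volume-lower}. Making the lower bound $u\ge\varepsilon p_\sigma$ and the capacity test interact with matching powers is where the real work lies.
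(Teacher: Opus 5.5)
\paragraph{Where the proposal stops.} Your opening reduction matches the paper. Hypothesis \eqref{eq:capacity-volume-condition} together with \eqref{eq:heat-capacity-volume-lower} gives $\cH_{q,\sigma}(o)=\infty$. What remains is the Liouville step: $\cH_{q,\sigma}(o)=\infty$ forces every nonnegative global supersolution to vanish. Your proposal never proves this step, and each route you try stalls, as you partly acknowledge:
\begin{itemize}
\item The Duhamel bound $u(T,o)\ge\varepsilon^q\int_0^T\sum_y p_\sigma(T-s,o,y)\,p_\sigma(s,y,o)^q\nu(y)\,\dd s$ carries the factor $p_\sigma(T-s,o,y)$, which has no uniform lower bound.
\item The Kaplan-type functional $F$ only yields $p_\sigma(T,o,o)\lesssim\varepsilon^{-1}T^{-1/(q-1)}$. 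That says nothing about $\cH_{q,\sigma}(o)$ without heat-kernel bounds, and even on $\mathbb Z^d$ it gives no contradiction at the critical exponent.
\item The final suggestion, testing against $e^{-\lambda t}$ times a $\lambda$-equilibrium potential, is not carried out. You give no mechanism producing the weights $\lambda^{q-2}r\,\Cap_\lambda^\nu(B_\rho(o,r))^{1-q}$ with matching powers.
\end{itemize}
As written, the proposal establishes only the reduction.

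\paragraph{The missing idea.} No lower bound $u\ge\varepsilon p_\sigma$ is needed. Instead you test against a function built from the heat kernel itself, so that its $L^q$-energy appears directly.
\begin{itemize}
\item On a finite connected $D\ni o$, put $K(t,x)=p_{\sigma,D}(t,o,x)$ and $H=\int_0^T\sum_{x\in D}K^q\nu\,\dd t$.
\item Let $\Psi$ solve $(-\partial_t-\Delta_{\sigma,D})\Psi=H^{-1}K^{q-1}$ on $(0,T)$ with $\Psi(T,\cdot)=0$. The semigroup property gives $\Psi(0,o)=1$.
\item With $q'=q/(q-1)$ and $\phi=\Psi^{q'}$, convexity gives $(-\partial_t-\Delta_{\sigma,D})\phi\le q'H^{-1}\Psi^{1/(q-1)}K^{q-1}$.
\item Multiply the inequality for $v(t,\cdot)=u(s+t,\cdot)$ by $\phi$; the shift $s>0$ makes $v$ regular at $t=0$. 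Sum over $D$; the boundary flux has a favourable sign because $v\ge0$ off $D$. Integrate over $(0,T)$.
\item Apply H\"older with exponents $q,q'$, using $\Psi^{q/(q-1)}=\phi$ and $(q-1)q'=q$. This gives
\[
A+B\le q'H^{-1/q}A^{1/q},\qquad A=\int_0^T\sum_{x\in D}v^q\phi\,\nu\,\dd t,\quad B=\sum_{x\in D}v(0,x)\phi(0,x)\nu(x).
\]
\item Absorbing gives $u(s,o)\nu(o)\le B\le(q')^{q'}H^{-1/(q-1)}$.
\item Let $T\to\infty$ and $D\uparrow V$; the Dirichlet kernels increase to $p_\sigma$. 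This gives $u(s,o)=0$ for all $s>0$.
\item Then $\partial_tu(s,o)=0$ and $\Delta_\sigma u(s,o)\ge0$ force $u$ to vanish at every neighbour of $o$, and connectedness gives $u\equiv0$.
\end{itemize}
In particular, your Step~1 positivity argument is unnecessary. So is the Harnack-type transfer of finiteness of $\cH_{q,\sigma}$ between basepoints, which is also unjustified without further hypotheses.
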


\begin{remark}
For every measurable function
$R:[1,\infty)\to(0,\infty)$ satisfying $R(r)\ge2r$, the same proof gives
\begin{equation*}
\cH_{q,\sigma}(o)
\ge
c_q\int_1^\infty
\frac{r}{M_o(R(r))^{q-1}}
\log\!\left(
1+\frac{M_o(R(r))}{r^2\cC_{o,\rho}(r,R(r))}
\right)\,\dd r.
\end{equation*}
Thus the outer radius may be optimized scale by scale.
\end{remark}

Because of \eqref{eq:intro-annular-capacity-cutoff}, the logarithmic factor in
\eqref{eq:capacity-volume-condition} is at least $\log2$.  Hence Theorem~\ref{thm:capacity-volume-criterion} yields the following.

\begin{theorem}\label{thm:heat-energy-lower}
Let $q>1$.  There exists $c_q>0$, depending only on $q$, such that for every
$o\in V$,
\begin{equation}\label{eq:main-heat-lower}
        \cH_{q,\sigma}(o)
        \ge
        c_q\int_2^\infty
        \frac{r}{M_o(r)^{q-1}}\,\dd r.
\end{equation}
\end{theorem}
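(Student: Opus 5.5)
Theorem~\ref{thm:heat-energy-lower} follows from the second inequality \eqref{eq:heat-capacity-volume-lower} of Theorem~\ref{thm:heat-capacity-lower}. The proof has three steps: bound the logarithmic factor from below by a constant, change variables, and shift the lower limit of integration.

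First, we bound the logarithm. By the annular cutoff estimate \eqref{eq:intro-annular-capacity-cutoff}, $r^2\cC_{o,\rho}(r)\le M_o(2r)$ for every $r\ge1$, so
\[
\frac{M_o(2r)}{r^2\cC_{o,\rho}(r)}\ge 1,
\]
and the logarithmic factor in \eqref{eq:heat-capacity-volume-lower} is at least $\log 2$. If $\cC_{o,\rho}(r)=0$ we read the logarithm as $+\infty$; this only strengthens the bound. To verify \eqref{eq:intro-annular-capacity-cutoff}, I would take the cutoff
\[
f(x)=\min\{1,(2-d_\rho(o,x)/r)_+\}.
\]
It is finitely supported because $d_\rho$ is proper, and it is $r^{-1}$-Lipschitz with respect to $d_\rho$, so $|f(x)-f(y)|\le r^{-1}\rho(x,y)$ for $x\sim y$. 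Edges contributing to the energy have at least one endpoint in $B_\rho(o,2r)$. Bounding each edge term by the endpoint in the ball and using \eqref{eq:adapted-weight} gives
\[
\mathcal E_{\mathrm D}(f)\le r^{-2}\sum_{x\in B_\rho(o,2r)}\sum_{y\sim x}\mu_{xy}\rho(x,y)^2\le r^{-2}M_o(2r).
\]
One must check that the factor $\tfrac12$ absorbs the double counting of edges with both endpoints in the ball. This is routine.

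Second, we change variables. Inserting the bound from the first step gives
\[
\cH_{q,\sigma}(o)\ge c_q\log 2\int_1^\infty \frac{r}{M_o(2r)^{q-1}}\dd r.
\]
Substituting $s=2r$ turns the right-hand side into $\tfrac{c_q\log2}{4}\int_2^\infty s\,M_o(s)^{1-q}\dd s$. This is \eqref{eq:main-heat-lower} with a new $c_q$.

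The only point needing care is the validity of the cutoff estimate. Everything else is a direct consequence of Theorem~\ref{thm:heat-capacity-lower}, which is assumed, so I expect no real obstacle.
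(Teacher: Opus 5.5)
Your proposal is correct and is essentially the paper's proof. The paper also bounds the logarithm in \eqref{eq:heat-capacity-volume-lower} below by $\log 2$ via the cutoff estimate $\cC_{o,\rho}(r)\le r^{-2}M_o(2r)$ (Lemma~\ref{lem:massive-annular-capacity-comparison}, proved with the same radial cutoff you use), and then substitutes $s=2r$.
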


\begin{theorem}\label{thm:volume-criterion}
Let $q>1$.  Assume that, for some (and hence every) $o\in V$,
\begin{equation}\label{eq:volume-condition}
        \int_1^\infty
        \frac{r}{M_o(r)^{q-1}}\,\dd r
        =\infty.
\end{equation}
Then every nonnegative global classical supersolution of
\eqref{eq:heat-ineq} vanishes identically.
\end{theorem}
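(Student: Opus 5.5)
Theorem~\ref{thm:volume-criterion} will follow from Theorem~\ref{thm:capacity-volume-criterion}. The argument has two parts: first show that the volume divergence \eqref{eq:volume-condition} implies the capacity--volume divergence \eqref{eq:capacity-volume-condition}; then justify the parenthetical ``for some (and hence every) $o$''.

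\emph{Comparison of integrands.} By the cutoff bound \eqref{eq:intro-annular-capacity-cutoff}, $r^2\cC_{o,\rho}(r)\le M_o(2r)$, so the logarithm in \eqref{eq:capacity-volume-condition} is at least $\log 2$. Hence it suffices to show
\[
\int_1^\infty \frac{r}{M_o(2r)^{q-1}}\,\dd r=\infty .
\]
Substituting $s=2r$ gives
\[
\int_1^\infty \frac{r}{M_o(2r)^{q-1}}\,\dd r=\frac14\int_2^\infty \frac{s}{M_o(s)^{q-1}}\,\dd s .
\]
This diverges because \eqref{eq:volume-condition} diverges and the piece over $[1,2]$ is finite: $M_o(s)\ge\nu(o)>0$ there, so that piece is bounded by $2\nu(o)^{1-q}$. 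Theorem~\ref{thm:capacity-volume-criterion} then gives the conclusion. Alternatively, the same computation is just Theorem~\ref{thm:heat-energy-lower}, which gives $\cH_{q,\sigma}(o)=\infty$. One can then invoke the mechanism behind Theorem~\ref{thm:capacity-volume-criterion}, namely that a nontrivial supersolution forces $\cH_{q,\sigma}(o)<\infty$. I would cite the theorem rather than reprove that mechanism.

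\emph{Independence of the base point.} Fix $o,o'$ and put $a:=d_\rho(o,o')<\infty$, which is finite by connectedness. The triangle inequality gives $B_\rho(o',r)\subset B_\rho(o,r+a)$, hence $M_{o'}(r)\le M_o(r+a)$, and symmetrically $M_o(r)\le M_{o'}(r+a)$. Suppose \eqref{eq:volume-condition} diverges at $o$. Then
\[
\int_{1+a}^\infty \frac{r}{M_{o'}(r)^{q-1}}\,\dd r\ \ge\ \int_{1+a}^\infty \frac{r}{M_o(r+a)^{q-1}}\,\dd r=\int_{1+2a}^\infty\frac{s-a}{M_o(s)^{q-1}}\,\dd s .
\]
For $s\ge 2a$ we have $s-a\ge s/2$, so the right-hand side diverges as well. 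The remaining piece over $[1,1+a]$ is finite because $M_{o'}\ge\nu(o')>0$.

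The only point needing care is this shift-invariance step, which requires absorbing the additive shift $a$ into the linear weight $r$. The rest is a direct reduction to Theorem~\ref{thm:capacity-volume-criterion} via \eqref{eq:intro-annular-capacity-cutoff}.
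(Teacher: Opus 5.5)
Your proof is correct and follows the paper's route: the cutoff bound \eqref{eq:intro-annular-capacity-cutoff} and the substitution $s=2r$ are exactly how the paper obtains Theorem~\ref{thm:heat-energy-lower}, after which $\cH_{q,\sigma}(o)=\infty$ and Corollary~\ref{cor:heat-energy-liouville} (equivalently, Theorem~\ref{thm:capacity-volume-criterion}) finish the argument, and your base-point argument matches the paper's remark following the theorem. One small slip in that last step: what you actually need is finiteness of $\int_1^{1+2a}s\,M_o(s)^{1-q}\,\dd s$, so that the tail at $o$ still diverges, whereas finiteness of the $[1,1+a]$ piece at $o'$ plays no role in proving divergence; the needed bound follows from the same observation $M_o\ge\nu(o)>0$.
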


The finiteness or infiniteness of the integral in
\eqref{eq:volume-condition} is independent of the basepoint.  Indeed, for
$o,x\in V$,
\[
 B_\rho(x,r)\subset B_\rho(o,r+d_\rho(o,x)),
 \qquad
 B_\rho(o,r)\subset B_\rho(x,r+d_\rho(o,x)).
\]
The conclusion follows by a change of variables and the comparability of
$r$ and $r+d_\rho(o,x)$ for large $r$.

The next two corollaries concern the unit-speed, graph-distance case; the edge
conductances $\mu_{xy}$ remain arbitrary.

\begin{corollary}\label{cor:unweighted-volume-criterion}
If $\sigma\equiv1$ and $\rho\equiv1$, then \eqref{eq:volume-condition} is
equivalent to
\begin{equation}\label{eq:unweighted-volume-condition}
        \sum_{n=1}^\infty
        \frac{n}{\mu(B(o,n))^{q-1}}=\infty,
\end{equation}
where $B(o,n)$ is the graph-distance ball.  Under
\eqref{eq:unweighted-volume-condition}, every nonnegative global classical
supersolution of $\partial_tu-\Delta u\ge u^q$ vanishes identically.
\end{corollary}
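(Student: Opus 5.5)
The corollary follows from Theorem~\ref{thm:volume-criterion} once we translate \eqref{eq:volume-condition} into the discrete sum \eqref{eq:unweighted-volume-condition}. First we check the hypotheses. With $\sigma\equiv1$ we have $\nu=\mu$ and $\Delta_\sigma=\Delta$, so the inequality in question is exactly \eqref{eq:heat-ineq}. The constant length $\rho\equiv1$ is $\nu$-adapted, since $\sum_{y\sim x}\mu_{xy}\cdot1=\mu(x)=\nu(x)$, so \eqref{eq:adapted-weight} holds with equality. The associated path metric $d_\rho$ is the combinatorial graph distance, and its balls $B_\rho(o,r)=B(o,\lfloor r\rfloor)$ are finite by local finiteness, so $d_\rho$ is proper. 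Hence $M_o(r)=\mu(B(o,\lfloor r\rfloor))$, a step function that is constant on each interval $[n,n+1)$.

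Next we compare the integral with the sum. Write $m_n:=\mu(B(o,n))$, which is nondecreasing in $n$. Splitting the integral into unit intervals gives
\[
\int_1^\infty\frac{r}{M_o(r)^{q-1}}\dd r=\sum_{n\ge1}\frac{1}{m_n^{q-1}}\int_n^{n+1}r\,\dd r=\sum_{n\ge1}\frac{n+\tfrac12}{m_n^{q-1}}.
\]
For $n\ge1$ we have $n\le n+\tfrac12\le 2n$, so this sum diverges if and only if $\sum_n n\,m_n^{-q+1}$ diverges. This proves the claimed equivalence of \eqref{eq:volume-condition} and \eqref{eq:unweighted-volume-condition}. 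The second assertion then follows directly from Theorem~\ref{thm:volume-criterion}.

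There is no real obstacle here. The only point needing care is confirming that $\rho\equiv1$ satisfies the adaptedness condition \eqref{eq:adapted-weight} exactly because $\sigma\equiv1$, so that the abstract theorem applies with the graph-distance balls.
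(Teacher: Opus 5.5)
Your proposal is correct and follows the paper's proof: you identify $M_o(r)=\mu(B(o,n))$ for $r\in[n,n+1)$, evaluate the integral as $\sum_{n\ge1}(n+\tfrac12)\,\mu(B(o,n))^{1-q}$, and invoke Theorem~\ref{thm:volume-criterion}. Your explicit checks that $\rho\equiv1$ is $\nu$-adapted and that $d_\rho$ is proper are details the paper leaves implicit.
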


\begin{corollary}\label{cor:unweighted-equation-volume-criterion}
Let $\sigma\equiv1$ and $\rho\equiv1$.  If
\eqref{eq:unweighted-volume-condition} holds, then the equation
\[
        \partial_tu-\Delta u=u^q
        \qquad\text{on }(0,\infty)\times V
\]
admits no positive global classical solution.  Moreover, for every nontrivial
nonnegative initial datum $u_0:V\to[0,\infty)$, the Cauchy problem
\[
\begin{cases}
        \partial_tu-\Delta u=u^q,
        &t>0,\ x\in V,\\
        u(0,x)=u_0(x),
        &x\in V,
\end{cases}
\]
admits no nonnegative global classical solution.
\end{corollary}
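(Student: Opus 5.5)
The proof consists of specializing Theorem~\ref{thm:volume-criterion} to $\sigma\equiv1$, $\rho\equiv1$, and then passing from supersolutions to solutions. When $\sigma\equiv1$, we have $\nu=\mu$ and $\Delta_\sigma=\Delta$. The constant length $\rho\equiv1$ is $\nu$-adapted, because $\sum_{y\sim x}\mu_{xy}=\mu(x)=\nu(x)$. The path metric $d_\rho$ is then the graph distance, and its balls are finite because the graph is locally finite. Hence $M_o(r)=\mu(B(o,\lfloor r\rfloor))$, and Corollary~\ref{cor:unweighted-volume-criterion} (equivalently, Theorem~\ref{thm:volume-criterion}) applies. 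We need the first part of that corollary, namely that \eqref{eq:unweighted-volume-condition} is equivalent to \eqref{eq:volume-condition}. If this is not taken as already established, it follows from the step-function comparison
\[
\int_n^{n+1}\frac{r\,\dd r}{M_o(r)^{q-1}}\asymp\frac{n}{\mu(B(o,n))^{q-1}},\qquad n\ge1,
\]
which uses $n\le r\le 2n$ and $M_o(r)=\mu(B(o,n))$ on $[n,n+1)$. Consequently, under \eqref{eq:unweighted-volume-condition}, every nonnegative global classical supersolution of $\partial_tu-\Delta u\ge u^q$ on $(0,\infty)\times V$ vanishes identically.

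\textbf{Positive solutions.} Suppose $u$ is a positive global classical solution of $\partial_tu-\Delta u=u^q$ on $(0,\infty)\times V$. Then $u$ is in particular a nonnegative global classical supersolution, so $u\equiv0$. This contradicts positivity, so no such solution exists.

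\textbf{Cauchy problem.} Let $u_0\ge0$ with $u_0\not\equiv0$, and suppose $u$ is a nonnegative global classical solution of the Cauchy problem. Its restriction to $(0,\infty)\times V$ is a nonnegative global classical supersolution, so it vanishes for all $t>0$. By continuity at $t=0$, $u(0,x)=\lim_{t\downarrow0}u(t,x)=0$ for every $x$, so $u_0\equiv0$, a contradiction.

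The only step needing care is the discretization of the volume integral; everything else reduces directly to the earlier theorem.
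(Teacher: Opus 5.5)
Your proposal is correct and follows the paper's proof: you observe that a positive solution, or the restriction to $(0,\infty)\times V$ of a Cauchy solution, is a nonnegative global classical supersolution, then apply Corollary~\ref{cor:unweighted-volume-criterion} and use pointwise continuity at $t=0$ to force $u_0\equiv0$. Your additional checks (adaptedness of $\rho\equiv1$, properness of the graph distance, and the step-function comparison of the volume integral with the series) are all sound and simply re-derive what the paper already proves in Corollary~\ref{cor:unweighted-volume-criterion}.
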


The second assertion is the standard Cauchy formulation of the Fujita
nonexistence conclusion.  The first assertion, and still more
Corollary~\ref{cor:unweighted-volume-criterion}, are formally stronger because
they do not require an initial trace.

We record a standard heat-kernel supersolution construction, which yields
small-data Cauchy solutions in the normalization used here; compare
\cite{Weissler81,LinWu17}.

\begin{proposition}\label{prop:heat-kernel-existence}
Let $q>1$ and $o\in V$.  If
\begin{equation}\label{eq:heat-kernel-existence-condition}
        \int_1^\infty
        \left(\sup_{x\in V}p_\sigma(t,o,x)\right)^{q-1}\,\dd t
        <\infty,
\end{equation}
then there exists $a_0>0$ such that, for every $a\in(0,a_0]$, the Cauchy
problem \eqref{eq:cauchy-problem} with
$u_0=a\1_{\{o\}}$ admits a positive global classical solution.
\end{proposition}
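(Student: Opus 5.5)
The plan is the classical Weissler-type supersolution argument: compare with a multiple of the heat semigroup acting on the point mass and use monotone iteration on finite exhaustions. Write $P_t f(x)=\sum_y p_\sigma(t,x,y)f(y)\nu(y)$ for the minimal semigroup. For $u_0=a\1_{\{o\}}$ we have $P_tu_0(x)=a\,\nu(o)\,p_\sigma(t,x,o)$. Put
\[
\Phi(t):=\sup_{x\in V}P_tu_0(x)=a\,\nu(o)\sup_x p_\sigma(t,o,x),
\]
which is nonincreasing in $t$ and bounded by $a$ by sub-Markovianity. The goal is a supersolution of the form $\bar u(t,x)=\theta(t)P_tu_0(x)$ with $\theta(0)=1$ and $1\le\theta\le 2$.

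Since $\partial_t P_tu_0=\Delta_\sigma P_tu_0$, we get $\partial_t\bar u-\Delta_\sigma\bar u=\theta'(t)P_tu_0$. So it suffices that
\[
\theta'(t)\,P_tu_0\ge\theta^q (P_tu_0)^q,
\]
and this follows from $\theta'\ge\theta^q\Phi(t)^{q-1}$. Solve the ODE $\theta'=\theta^q\Phi^{q-1}$, $\theta(0)=1$, explicitly:
\[
\theta(t)=\Bigl(1-(q-1)\int_0^t\Phi(s)^{q-1}\dd s\Bigr)^{-1/(q-1)}.
\]
This is finite and at most $2$ for all $t$ as soon as $(q-1)\int_0^\infty\Phi^{q-1}\le 1-2^{1-q}$. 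Split the integral. On $[0,1]$ it is at most $a^{q-1}$. On $[1,\infty)$ it equals $(a\nu(o))^{q-1}\int_1^\infty(\sup_xp_\sigma(t,o,x))^{q-1}\dd t$, which is finite by \eqref{eq:heat-kernel-existence-condition}. Both pieces scale like $a^{q-1}$, which fixes $a_0$.

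Next, construct an actual solution below $\bar u$. Take a finite connected exhaustion $V_n\uparrow V$ and solve the Dirichlet problem on $V_n$: a finite-dimensional ODE $\partial_tu_n=\Delta_\sigma u_n+u_n^q$ on $V_n$, with $u_n=0$ off $V_n$ and $u_n(0)=u_0\1_{V_n}$. Its local solution stays nonnegative. By the finite-dimensional comparison principle (quasimonotone system, since off-diagonal entries of $\Delta_\sigma$ are nonnegative and $s\mapsto s^q$ is increasing), $0\le u_n\le\bar u$, because $\bar u\ge0$ on $V\setminus V_n$ and $\bar u$ is a supersolution. Hence $u_n$ is bounded and exists globally. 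The same comparison gives $u_n\le u_{n+1}$.

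Let $u=\lim u_n\le\bar u$. Pass to the limit in the Duhamel form
\[
u_n(t)=P^{(n)}_tu_0+\int_0^tP^{(n)}_{t-s}u_n(s)^q\dd s,
\]
using monotone convergence and $P^{(n)}\uparrow P$ for the minimal kernel. This gives the same formula for $u$ with $P$, with everything finite since $u\le\bar u\le 2a$. Local finiteness then yields $C^1$ in $t$ and the pointwise equation: each $u(\cdot,x)$ is continuous by domination, and the equation at $x$ involves only finitely many neighbours. Continuity at $t=0$ with $u(0)=u_0$ follows from $P_tu_0\to u_0$ pointwise and the integral term being $O(t)$. Positivity for $t>0$ follows from $u\ge P_tu_0=a\nu(o)p_\sigma(t,x,o)>0$ by connectedness.

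The main obstacle I expect is justifying the limit passage and the classical regularity without conservativeness or bounded geometry. For the regularity, I would differentiate the Duhamel identity directly, using $\partial_tp_\sigma=\Delta_\sigma p_\sigma$ and local finiteness. Alternatively, one can pass to the limit in the integrated ODE $u_n(t,x)=u_0(x)+\int_0^t(\Delta_\sigma u_n+u_n^q)(s,x)\dd s$, which involves only finitely many vertices and uniformly bounded integrands, so dominated convergence applies.
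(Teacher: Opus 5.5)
Your proposal is correct and is essentially the paper's argument: a Weissler-type supersolution (a time factor solving $A'=A^qB^{q-1}$ times the heat kernel), followed by the monotone finite-exhaustion comparison that the paper packages as Proposition~\ref{prop:supersolution-solution-equivalence}. The differences are minor. You start the supersolution $\theta(t)P_tu_0$ at the datum itself with $\theta(0)=1$, covering $[0,1]$ via $\Phi\le a$ and obtaining the slightly sharper bound $u\le 2P_tu_0$, rather than shifting the kernel by one time unit; and because $\Phi$ is only known to be nonincreasing and lower semicontinuous, your $\theta$ satisfies $\theta'=\theta^q\Phi^{q-1}$ only as a right derivative (or a.e.), which suffices for the finite-dimensional comparison but is exactly why the paper mollifies $b$ into a smooth majorant $B$.
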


Notice that
\[
\sum_{x\in V}p_\sigma(t,o,x)^q\nu(x)
\le
\left(\sup_{x\in V}p_\sigma(t,o,x)\right)^{q-1}
\sum_{x\in V}p_\sigma(t,o,x)\nu(x)
\le
\left(\sup_{x\in V}p_\sigma(t,o,x)\right)^{q-1}.
\]
Together with $\sup_xp_\sigma(t,o,x)\le\nu(o)^{-1}$, this shows that
\eqref{eq:heat-kernel-existence-condition} implies
$\cH_{q,\sigma}(o)<\infty$, as is necessary in view of
Corollary~\ref{cor:heat-energy-liouville}.  Finiteness of
$\cH_{q,\sigma}(o)$ alone is not asserted to imply existence.

\begin{corollary}\label{cor:heat-kernel-volume-existence}
Let $q>1$.  Suppose that, for some $o\in V$ and $C>0$,
\begin{equation}\label{eq:anchored-heat-kernel-upper}
        \sup_{x\in V}p_\sigma(t,o,x)
        \le \frac{C}{M_o(\sqrt t)},
        \qquad t\ge1.
\end{equation}
If
\begin{equation}\label{eq:volume-convergence-condition}
        \int_1^\infty
        \frac{r}{M_o(r)^{q-1}}\,\dd r
        <\infty,
\end{equation}
then there exists $a_0>0$ such that, for every $a\in(0,a_0]$, the Cauchy
problem \eqref{eq:cauchy-problem} with
$u_0=a\1_{\{o\}}$ admits a positive global classical solution.
\end{corollary}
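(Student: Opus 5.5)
The plan is to reduce Corollary~\ref{cor:heat-kernel-volume-existence} to Proposition~\ref{prop:heat-kernel-existence}. It suffices to verify the integrability condition \eqref{eq:heat-kernel-existence-condition}. By the anchored bound \eqref{eq:anchored-heat-kernel-upper}, for $t\ge1$ we have
\[
\Bigl(\sup_{x\in V}p_\sigma(t,o,x)\Bigr)^{q-1}\le \frac{C^{q-1}}{M_o(\sqrt t)^{q-1}}.
\]
Hence it is enough to show that $\int_1^\infty M_o(\sqrt t)^{1-q}\dd t<\infty$.

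The key step is the substitution $t=r^2$, with $\dd t=2r\dd r$. Under it,
\[
\int_1^\infty \frac{\dd t}{M_o(\sqrt t)^{q-1}}
=2\int_1^\infty\frac{r}{M_o(r)^{q-1}}\dd r,
\]
which is finite by \eqref{eq:volume-convergence-condition}. Measurability is not an issue: $r\mapsto M_o(r)$ is nondecreasing. It is also positive for $r\ge1$, since $o\in B_\rho(o,r)$ gives $M_o(r)\ge\nu(o)>0$. So the integrand is a nonnegative monotone function and the change of variables is legitimate.

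Combining the two displays gives \eqref{eq:heat-kernel-existence-condition}. Proposition~\ref{prop:heat-kernel-existence} then provides $a_0>0$ such that, for every $a\in(0,a_0]$, the Cauchy problem \eqref{eq:cauchy-problem} with $u_0=a\1_{\{o\}}$ has a positive global classical solution.

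There is no real obstacle; all the analytic work is contained in Proposition~\ref{prop:heat-kernel-existence}. The only point needing care is that the hypothesis \eqref{eq:anchored-heat-kernel-upper} is imposed only for $t\ge1$. This matches the range $t\ge1$ in \eqref{eq:heat-kernel-existence-condition}, so no short-time information is required.
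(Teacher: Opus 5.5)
Your proposal is correct and follows the paper's proof essentially verbatim: you bound $\bigl(\sup_x p_\sigma(t,o,x)\bigr)^{q-1}$ by $C^{q-1}M_o(\sqrt t)^{1-q}$, substitute $t=r^2$ to obtain $2C^{q-1}\int_1^\infty r\,M_o(r)^{1-q}\,\dd r<\infty$, and then invoke Proposition~\ref{prop:heat-kernel-existence}. Your remarks on monotonicity, on positivity of $M_o$, and on the matching range $t\ge1$ are accurate and harmless additions.
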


Thus, on graphs satisfying \eqref{eq:anchored-heat-kernel-upper},
Theorem~\ref{thm:volume-criterion} and
Corollary~\ref{cor:heat-kernel-volume-existence} give the exact
divergence--convergence alternative for the integral volume test.  In view of
Proposition~\ref{prop:supersolution-solution-equivalence}, this is an exact
alternative both for trace-free supersolutions and, at the existential level,
for the standard Cauchy problem: divergence excludes every nontrivial
nonnegative initial datum, whereas convergence produces positive global
solutions for all sufficiently small point-source data.  The heat-kernel
hypothesis is used only for this converse direction; convergence of the volume
integral alone is not asserted to imply existence on an arbitrary weighted
graph or for arbitrary small initial data.

The Euclidean Fujita phenomenon has an extensive parabolic literature.  After Fujita's
original work, nonexistence, grow-up, and critical-exponent results were
refined in \cite{Hayakawa73,KST77,Weissler80,Weissler81,BandleLevine89}; see
also \cite{Levine90,GalaktionovLevine98}.  On noncompact geometries and in the
presence of potentials, related phenomena were developed in
\cite{Pinsky97,Pinsky09,Zhang98,Zhang99,Zhang01,Ishige08,BandlePozioTesei11,MMP17,GXXS20}.
For heat-kernel estimates on graphs, see \cite{Delmotte,GT01}.  For
semilinear parabolic equations and inequalities on graphs, see
\cite{LinWu17,LinWu18,MPS26}.
The present proof is closest in spirit to the elliptic flow-decomposition
method of \cite{GHHS26}.

The proof proceeds in three steps.  First, a finite-cylinder test reduces
nonexistence to divergence of the heat-kernel $L^q$-energy
$\cH_{q,\sigma}(o)$.  Second, a Laplace--Hardy inequality reduces that heat
energy to nonlinear resolvent energies.  Third, the resolvent current is
decomposed into voltage-decreasing paths.  An ordinary edge is assigned its
intrinsic length $\rho$, while the killing edge at $x$ has conductance
$\lambda\nu(x)$ and length $\lambda^{-1/2}$.  The adaptedness condition
\eqref{eq:adapted-weight} together with the identity
$\lambda\nu(x)\lambda^{-1}=\nu(x)$ then unifies graph motion and killing within a single path-energy estimate.

The paper is organized as follows.  Section~2 establishes the relation among
supersolutions and solutions on the open cylinder, as well as Cauchy
solutions, and develops
finite-cylinder testing and the Laplace--Hardy reduction.  Section~3 proves
the resolvent--capacity estimates
by decomposing resolvent currents into paths.  Section~4 derives the parabolic
capacity estimate, the mixed capacity--volume criterion, and the integral
volume criterion.  Section~\ref{sec:sharpness} proves the
converse via a heat kernel condition and gives sharpness examples on integer lattices and logarithmically
perturbed weighted half-lines.

\section{Supersolutions, testing, and resolvent reduction}

Let $D\subset V$ be finite and connected.  The Dirichlet operator
$\Delta_{\sigma,D}$ is obtained by extending functions by zero outside $D$ and
then applying \eqref{eq:laplacian} on $D$.  Its heat kernel with respect to
$\nu$ is denoted by $p_{\sigma,D}(t,x,y)$.

\subsection{Supersolutions on the open cylinder and Cauchy solutions}

\begin{proposition}\label{prop:supersolution-solution-equivalence}
Let $q>1$.  The following are equivalent:
\begin{enumerate}[label=\textnormal{(\roman*)}]
\item \eqref{eq:heat-ineq} admits a nontrivial nonnegative global classical
supersolution on $(0,\infty)$;
\item \eqref{eq:heat-equation} admits a positive global classical solution on
$(0,\infty)$;
\item for some $o\in V$ and $a>0$, the Cauchy problem
\eqref{eq:cauchy-problem} with $u_0=a\1_{\{o\}}$ admits a positive global
classical solution.
\end{enumerate}
Equivalently, condition \textnormal{(i)} may be replaced by the existence of a
positive global classical supersolution.  More precisely, if $U$ is as in
\textnormal{(i)}, if $s>0$ and $o\in V$ satisfy $U(s,o)>0$, and if
$0<a<U(s,o)$, then the Cauchy problem with
$u_0=a\1_{\{o\}}$ has a positive global classical solution $u$ satisfying
\begin{equation}\label{eq:solution-below-supersolution}
        0<u(t,x)\le U(s+t,x),
        \qquad t>0,\ x\in V.
\end{equation}
\end{proposition}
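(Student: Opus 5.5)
The implications (iii)$\Rightarrow$(ii)$\Rightarrow$(i) are essentially immediate. For (iii)$\Rightarrow$(ii), restrict the Cauchy solution to $(0,\infty)\times V$; positivity for $t>0$ is part of the definition of a positive Cauchy solution. For (ii)$\Rightarrow$(i), note that a positive solution is in particular a nontrivial nonnegative supersolution. The same remarks show that (i) may equivalently be stated with positive supersolutions, once (i)$\Rightarrow$(iii) is established together with the bound \eqref{eq:solution-below-supersolution}. So the whole proof reduces to the refined statement: given $U$, $s$, $o$ and $0<a<U(s,o)$, construct $u$.

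\emph{Construction by monotone iteration on finite sets.} Set $V_s(t,x):=U(s+t,x)$. This is a classical supersolution on $(0,\infty)\times V$, and it is continuous up to $t=0$ with $V_s(0,x)=U(s,x)\ge a\1_{\{o\}}(x)$. Fix a finite connected exhaustion $D_n\uparrow V$ with $o\in D_1$.

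On each $D_n$, solve the Dirichlet Cauchy problem
\[
\partial_tu_n-\Delta_{\sigma,D_n}u_n=u_n^q,\qquad u_n(0)=a\1_{\{o\}},
\]
with $u_n=0$ off $D_n$. This is a finite-dimensional ODE system with locally Lipschitz right-hand side, so a local solution exists. We then compare $u_n$ with $V_s$ on $D_n$:
\begin{itemize}
\item $V_s\ge0=u_n$ off $D_n$;
\item $V_s(0)\ge u_n(0)$;
\item $V_s$ is a supersolution and $u_n$ a solution.
\end{itemize}
A finite-dimensional comparison principle applies because the system is quasi-monotone: the off-diagonal coefficients $\mu_{xy}/\nu(x)$ are nonnegative. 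It gives $0\le u_n\le V_s$ on the existence interval. Since $V_s$ is finite on every compact time interval, $u_n$ cannot blow up, and $u_n$ is global. Comparing $u_{n+1}$ on $D_n$, where $u_{n+1}\ge0$ serves as boundary data, gives $u_n\le u_{n+1}$. Hence $u:=\lim u_n$ exists and satisfies $0\le u\le V_s$.

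\emph{Passing to the limit.} Write the equation in Duhamel form at each vertex:
\[
u_n(t,x)=u_n(0,x)+\int_0^t\bigl(\Delta_{\sigma,D_n}u_n+u_n^q\bigr)(\tau,x)\,\dd\tau .
\]
The main obstacle is justifying this passage near $t=0$, where $U(s+\cdot)$ is only known to be continuous on $[0,\infty)$ at each vertex. Local finiteness helps here: each $\Delta_\sigma$ involves finitely many neighbours, and the integrands are dominated on $[0,T]$ by $\sup_{[0,T]}$ of finitely many continuous functions $V_s(\cdot,y)$. Dominated convergence then gives the integral equation for $u$. Consequently $u(\cdot,x)\in C([0,\infty))\cap C^1((0,\infty))$, the equation holds pointwise, and $u(0)=a\1_{\{o\}}$.

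\emph{Positivity.} We have $u\ge u_1$. Since $\partial_tu\ge\Delta_\sigma u\ge -u/\sigma$, we get $u(t,o)\ge ae^{-t/\sigma(o)}>0$. For a neighbour $y$ of a vertex $x$ where $u(\cdot,x)>0$,
\[
\partial_t\bigl(e^{t/\sigma(y)}u(t,y)\bigr)\ge e^{t/\sigma(y)}\frac{\mu_{yx}}{\nu(y)}\,u(t,x)>0 .
\]
Since $u(0,y)\ge0$, this yields $u(t,y)>0$ for every $t>0$. Connectedness and induction along paths from $o$ then give $u>0$ on $(0,\infty)\times V$, completing (iii) and \eqref{eq:solution-below-supersolution}.
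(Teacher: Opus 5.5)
Your proof is correct and follows the paper's argument essentially step for step: an exhaustion by finite connected sets, cooperative-system comparison of the Dirichlet solutions $u_n$ with the shifted supersolution $U(s+\cdot,\cdot)$ and with $u_{n+1}$, a monotone limit, and dominated convergence in the integrated equation using local finiteness. The only difference is the positivity step. You propagate positivity along paths for the limit $u$ via $\partial_t\bigl(e^{t/\sigma(y)}u(t,y)\bigr)\ge e^{t/\sigma(y)}\mu_{yx}\nu(y)^{-1}u(t,x)$, whereas the paper uses variation of constants and the positivity-improving Dirichlet semigroup on $D_n$; both work, though you should, as the paper does, replace $u^q$ by $(u_+)^q$ so that the finite ODE is defined and locally Lipschitz for all real arguments.
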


\begin{proof}
The implications
\textnormal{(iii)}$\Rightarrow$\textnormal{(ii)}$\Rightarrow$\textnormal{(i)}
are immediate.  We prove \textnormal{(i)}$\Rightarrow$\textnormal{(iii)} and
the quantitative assertion.  Choose $s>0$ and $o\in V$ such that
$U(s,o)>0$, and fix any $a\in(0,U(s,o))$.  Put
\begin{equation*}
        f:=a\1_{\{o\}},
        \qquad
        \overline U(t,x):=U(s+t,x),
        \qquad t\ge0.
\end{equation*}
Choose an increasing exhaustion
$D_1\subset D_2\subset\cdots\uparrow V$ by finite connected sets containing
$o$.  For each $n$, let $u_n$ be the maximal nonnegative solution of the
finite-dimensional Dirichlet problem
\begin{equation}\label{eq:finite-nonlinear-dirichlet-problem}
\begin{cases}
\partial_tu_n-\Delta_{\sigma,D_n}u_n=u_n^q,
        &t>0,\ x\in D_n,\\
u_n(0,x)=f(x),
        &x\in D_n.
\end{cases}
\end{equation}
Applying standard ordinary differential equation theory to the locally
Lipschitz extension $r\mapsto(r_+)^q$, where $r_+:=\max\{r,0\}$, gives a
unique maximal solution.  The nonnegative cone is invariant, so this solution
satisfies \eqref{eq:finite-nonlinear-dirichlet-problem}.  We extend
$u_n(t,\cdot)$ by zero to $V$.

For $x\in D_n$, zero extension outside $D_n$ gives
\begin{align*}
(\partial_t-\Delta_{\sigma,D_n})\overline U(t,x)
&=(\partial_t-\Delta_\sigma)\overline U(t,x)
 +\frac1{\nu(x)}
  \sum_{\substack{y\notin D_n\\y\sim x}}
  \mu_{xy}\overline U(t,y)\\
&\ge \overline U(t,x)^q.
\end{align*}
Moreover, $f\le\overline U(0,\cdot)$ on $D_n$.  The comparison principle for
the cooperative finite-dimensional system therefore yields
\begin{equation}\label{eq:finite-solution-upper-bound}
        0\le u_n(t,x)\le\overline U(t,x)
\end{equation}
throughout the maximal interval of existence.  On every bounded time
interval the right-hand side is bounded on the finite set $D_n$; hence the
ordinary differential equation continuation criterion shows that $u_n$ is
global.

The solutions are monotone with respect to the domain.  Indeed, for
$x\in D_n$,
\begin{align*}
(\partial_t-\Delta_{\sigma,D_n})u_{n+1}(t,x)
&=(\partial_t-\Delta_{\sigma,D_{n+1}})u_{n+1}(t,x)\\
&\quad+
\frac1{\nu(x)}
\sum_{\substack{y\in D_{n+1}\setminus D_n\\y\sim x}}
\mu_{xy}u_{n+1}(t,y)\\
&\ge u_{n+1}(t,x)^q.
\end{align*}
Thus $u_{n+1}|_{D_n}$ is a supersolution of
\eqref{eq:finite-nonlinear-dirichlet-problem} with the same initial value as
$u_n$.  A second application of comparison gives
\begin{equation*}
        u_n(t,x)\le u_{n+1}(t,x),
        \qquad t\ge0,\ x\in D_n.
\end{equation*}
Consequently the pointwise limit
\begin{equation*}
        u(t,x):=\lim_{n\to\infty}u_n(t,x)
\end{equation*}
exists and, by \eqref{eq:finite-solution-upper-bound}, satisfies
$0\le u(t,x)\le\overline U(t,x)$.

We next pass to the equation.  Fix $x\in V$.  By local finiteness, for all
sufficiently large $n$ the set $D_n$ contains $x$ and all its neighbors.  For
such $n$,
\begin{equation}\label{eq:finite-solution-integral-equation}
\begin{split}
u_n(t,x)=f(x)+\int_0^t\biggl[
&\frac1{\nu(x)}\sum_{y\sim x}\mu_{xy}
       \bigl(u_n(r,y)-u_n(r,x)\bigr)\\
&+u_n(r,x)^q\biggr]~\dd r.
\end{split}
\end{equation}
On a fixed compact time interval, the absolute value of the integrand is
bounded independently of $n$ by
\begin{equation*}
\frac1{\nu(x)}\sum_{y\sim x}\mu_{xy}
 \bigl(\overline U(r,y)+\overline U(r,x)\bigr)
 +\overline U(r,x)^q,
\end{equation*}
which is integrable because $x$ has only finitely many neighbors.  Dominated
convergence in \eqref{eq:finite-solution-integral-equation} gives
\begin{equation}\label{eq:limiting-solution-integral-equation}
        u(t,x)=f(x)+\int_0^t
        \bigl(\Delta_\sigma u(r,x)+u(r,x)^q\bigr)~\dd r.
\end{equation}
It follows first that $u(\cdot,x)$ is continuous for every $x$.  The
integrand in \eqref{eq:limiting-solution-integral-equation} is then continuous,
again by local finiteness, so $u(\cdot,x)$ is $C^1$ and satisfies
\eqref{eq:heat-equation} pointwise.

It remains to prove strict positivity.  The variation-of-constants formula on
$D_n$ gives
\begin{equation*}
        u_n(t,\cdot)
        =e^{t\Delta_{\sigma,D_n}}f
        +\int_0^t e^{(t-r)\Delta_{\sigma,D_n}}
                u_n(r,\cdot)^q~\dd r
        \ge e^{t\Delta_{\sigma,D_n}}f.
\end{equation*}
The Dirichlet semigroup on a finite connected set is positivity improving,
since its generator matrix has nonnegative off-diagonal entries and is
irreducible.  For any fixed $x\in V$, choose $n$ with $x\in D_n$.  Since
$o\in D_n$ and
$f(o)=a>0$, it follows that
\begin{equation*}
        u(t,x)\ge u_n(t,x)>0,
        \qquad t>0.
\end{equation*}
Equation~\eqref{eq:limiting-solution-integral-equation} also gives
$u(0,\cdot)=f$ and pointwise continuity at $t=0$.  Together with
$u\le\overline U$, this proves \eqref{eq:solution-below-supersolution}, the
Cauchy assertion, and hence the proposition.
\end{proof}

\begin{remark}
The proposition separates the two time-domain issues.  Its hypothesis is
trace-free, but the solution produced from it is a genuine global Cauchy
solution.  Consequently, a nontrivial global supersolution on $(0,\infty)\times V$
exists if and only if some nontrivial nonnegative datum admits a global Cauchy
solution; one may always take a sufficiently small point source.  More generally, if a nonnegative datum $f$ satisfies
$f\le U(s,\cdot)$ for some $s>0$, the same exhaustion argument constructs a
nonnegative global classical Cauchy solution with initial value $f$ and with
$u(t,x)\le U(s+t,x)$.  Thus, for a prescribed initial datum, the relevant
extra hypothesis is domination by a supersolution at the initial time.
\end{remark}

\subsection{Finite-cylinder testing}

For $T>0$ and $o\in D$, set
\[
        \cH_{D,T}^\sigma(o)
        :=
        \int_0^T\sum_{x\in D}p_{\sigma,D}(t,o,x)^q\nu(x)\,\dd t.
\]

\begin{lemma}\label{lem:testing}
Let $q>1$.  If $u$ is a nonnegative global classical supersolution of
\eqref{eq:heat-ineq}, then for every finite connected $D\ni o$, every $T>0$,
and every time shift $s>0$,
\begin{equation}\label{eq:testing-bound}
        u(s,o)\nu(o)
        \le
        \left(\frac{q}{q-1}\right)^{q/(q-1)}
        \bigl(\cH_{D,T}^\sigma(o)\bigr)^{-1/(q-1)}.
\end{equation}
\end{lemma}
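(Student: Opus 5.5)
The plan is to test the supersolution against the Dirichlet heat kernel on $D$ itself and to close the argument with a scalar differential inequality plus Hölder; the constant $(q/(q-1))^{q/(q-1)}$ should come out of an optimization in Young's inequality. Fix $D$, $T$, $s$, and set $w(t,x):=u(s+t,x)$ for $t\ge0$. As in the proof of Proposition~\ref{prop:supersolution-solution-equivalence}, zero extension outside $D$ only adds the nonnegative term $\nu(x)^{-1}\sum_{y\notin D,\,y\sim x}\mu_{xy}w(t,y)$. Hence $w|_D$ satisfies $(\partial_t-\Delta_{\sigma,D})w\ge w^q$ on $[0,\infty)\times D$, and it is continuous up to $t=0$ because $s>0$. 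Write $v(t,x):=p_{\sigma,D}(t,o,x)$, so that $\partial_tv=\Delta_{\sigma,D}v$, $v(0,\cdot)\nu(o)=\1_{\{o\}}$, and $\cH^\sigma_{D,T}(o)=\int_0^T\sum_xv^q\nu\,\dd t$.

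\emph{Step 1 (pairing).} For $0\le t\le T$ I pair $w$ with the time-reversed kernel, using the $\nu$-symmetry of $\Delta_{\sigma,D}$, and consider
\[
I(t):=\sum_{x\in D}w(t,x)\,p_{\sigma,D}(T-t,o,x)\,\nu(x).
\]
Differentiating and using symmetry cancels the Laplacian terms, which gives $I'(t)\ge\sum_xw(t,x)^qp_{\sigma,D}(T-t,o,x)\nu(x)$. This alone produces only a blow-up-type bound without $\cH$. So I will instead work with a family of weights: replace the single kernel by a time-dependent multiple $\xi(t)v(t,\cdot)$ and track
\[
J(t):=\sum_{x\in D}w(t,x)\,v(t,x)^{q-1}\dots
\]
Rather than fixing this form in advance, I would aim for the Duhamel lower bound
\[
w(t,\cdot)\ge e^{t\Delta_{\sigma,D}}w(0)+\int_0^te^{(t-r)\Delta_{\sigma,D}}w(r)^q\,\dd r .
\]
Combined with $e^{t\Delta_{\sigma,D}}w(0)\ge w(0,o)\nu(o)v(t,\cdot)$, this yields the comparison $w\ge y(t)\nu(o)v$. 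Here $y$ is a scalar to be found, with $y(0)=a:=u(s,o)$.

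\emph{Step 2 (scalar inequality).} I plug $w\ge Y(t)\,v$, where $Y:=y\nu(o)$, into the Duhamel term evaluated against $\delta_o$ at the final time. Positivity of $e^{\tau\Delta_{\sigma,D}}$, the semigroup identity $\sum_x v(r,x)p_{\sigma,D}(T-r,x,o)\nu(x)=p_{\sigma,D}(T,o,o)$, and Hölder/Jensen with respect to the sub-probability weights $v\nu$ (mass at most $1$) should give an integral inequality of the form
\[
Z(T)\ge a\nu(o)+\int_0^TZ(r)^q\sum_xv(r,x)^q\nu(x)\,\dd r
\]
for a suitably normalized quantity $Z$ bounded above by $w$ paired with $\delta_o$. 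Setting $h(r):=\sum_xv(r,x)^q\nu(x)$, the Bihari/ODE comparison for $Z'\ge hZ^q$ gives
\[
(q-1)^{-1}\bigl((a\nu(o))^{1-q}-Z(T)^{1-q}\bigr)\ge\int_0^Th\,\dd r .
\]
Since $Z(T)<\infty$, this already yields $(a\nu(o))^{q-1}\cH^\sigma_{D,T}(o)\le (q-1)^{-1}$ up to the stated constant.

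\emph{Main obstacle.} The delicate point is Step 2: obtaining an inequality in which the \emph{same} scalar appears on both sides with the weight exactly $h=\sum v^q\nu$. This requires choosing the normalization so that the nonlinearity is estimated by Jensen for the sub-Markovian measure without losing factors. If the direct Duhamel route loses the sharp constant, I will fall back on the classical test-function method. That means testing $(\partial_t-\Delta_{\sigma,D})w\ge w^q$ against $\varphi=\xi(t)^{k}\,p_{\sigma,D}(T-t,o,\cdot)$ with $k=q/(q-1)$ and absorbing $-\partial_t\varphi$ via Young's inequality $AB\le \frac{A^q}{q}+\frac{B^{q'}}{q'}$. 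This is precisely where the factor $(q/(q-1))^{q/(q-1)}$ would be produced, after optimizing $\xi$ so that $\int\xi'^{\,q'}\xi^{-k}$ reduces to $\cH^\sigma_{D,T}(o)^{-1/(q-1)}$.
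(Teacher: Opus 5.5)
\textbf{There is a genuine gap.} Your proposal does not yet prove the lemma. Step~2 is only announced (``should give'', ``a suitably normalized quantity $Z$''), and neither of the mechanisms you name produces the weight $h(r)=\sum_x v(r,x)^q\nu(x)$.

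\emph{Jensen does not give it.} Jensen for sub-probability weights bounds $\sum_x w^q m$ below by $(\sum_x w m)^q$. That is the Kaplan inequality $F'\ge F^q$ with weight $1$, i.e.\ exactly the $\cH$-free bound you already set aside.

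\emph{Duhamel does not give it either.} Feeding $w(r,\cdot)\ge Y(r)v(r,\cdot)$ into Duhamel and evaluating at $(T,o)$ gives $\int_0^T Y(r)^q\sum_y p_{\sigma,D}(T-r,o,y)v(r,y)^q\nu(y)\,\dd r$. To close this in a single scalar you would need $\sum_y p_{\sigma,D}(T-r,o,y)v(r,y)^q\nu(y)\ge h(r)\,v(T,o)$ for all $T>r$. You would also need the analogue at every $x\in D$, because $Y(T)$ is an infimum over $D$ while you only control the value at $o$. Letting $T\downarrow r$, the required inequality reduces to $p_{\sigma,D}(r,o,o)^{q-1}\ge\sum_x p_{\sigma,D}(r,o,x)^q\nu(x)$. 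This is false in general. For instance, take $D=\{o,x\}$ with $x$ a leaf weakly attached to $o$, $o$ adjacent to $V\setminus D$, and $q>2$; then the kernel density is much larger at $x$ than at $o$.

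\emph{The fallback does not rescue it.} For $\varphi=\xi(t)^{q'}p_{\sigma,D}(T-t,o,\cdot)$, the Young remainder is $C_q\int_0^T|\xi'(t)|^{q'}\sum_x p_{\sigma,D}(T-t,o,x)\nu(x)\,\dd t\le C_q\int_0^T|\xi'|^{q'}\,\dd t$. The initial term is $u(s,o)\nu(o)\,p_{\sigma,D}(T,o,o)$, not $u(s,o)\nu(o)$. A time-only cutoff therefore yields only the classical bound $u(s,o)\nu(o)\,p_{\sigma,D}(T,o,o)\le C_qT^{-1/(q-1)}$. No choice of $\xi$ makes $\sum_x p^q\nu$ appear, and this bound cannot imply the lemma. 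On $\mathbb Z^d$ with $q=1+2/d$, the quantity $T^{-1/(q-1)}/p(T,0,0)$ stays bounded, while $\cH_{q,\sigma}(0)=\infty$.

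\textbf{The missing idea} is to place the spatial weight $K^{q-1}$, where $K(t,x)=p_{\sigma,D}(t,o,x)$, into the \emph{source} of a backward problem rather than into a time cutoff. The paper sets $\Psi(t,x)=H^{-1}\int_t^T\sum_y p_{\sigma,D}(r-t,x,y)K(r,y)^{q-1}\nu(y)\,\dd r$. This gives $(-\partial_t-\Delta_{\sigma,D})\Psi=H^{-1}K^{q-1}$ and $\Psi(T,\cdot)=0$, and the semigroup property gives exactly $\Psi(0,o)=1$. One then tests against $\phi=\Psi^{q'}$. The convexity inequality $\Delta_{\sigma,D}\Psi^{q'}\ge q'\Psi^{q'-1}\Delta_{\sigma,D}\Psi$ yields $(-\partial_t-\Delta_{\sigma,D})\phi\le q'H^{-1}\Psi^{1/(q-1)}K^{q-1}$. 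H\"older with exponents $(q,q')$ then uses $(K^{q-1})^{q'}=K^q$, so the dual factor is exactly $H^{1/q'}$. Write $A=\int_0^T\sum_x w^q\phi\,\nu\,\dd t$ and $B=\sum_x w(0,x)\phi(0,x)\nu(x)\ge u(s,o)\nu(o)$. This gives $A+B\le q'H^{-1/q}(A+B)^{1/q}$, hence $A+B\le (q')^{q'}H^{-1/(q-1)}$. The constant $(q/(q-1))^{q/(q-1)}$ comes from this absorption step, not from optimizing a temporal cutoff.
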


\begin{proof}
Write
\[
        K(t,x):=p_{\sigma,D}(t,o,x),
        \qquad 0<t<T,
\]
and set $H:=\cH_{D,T}^\sigma(o)$.  Since $D$ is finite and $T<\infty$,
$0<H<\infty$.  Define
\[
        \Psi(t,x)
        :=
        H^{-1}
        \int_t^T\sum_{y\in D}
        p_{\sigma,D}(r-t,x,y)K(r,y)^{q-1}\nu(y)\,\dd r.
\]
Then
\[
        (-\partial_t-\Delta_{\sigma,D})\Psi
        =H^{-1}K^{q-1},
        \qquad
        \Psi(T,\cdot)=0,
\]
and the semigroup property gives
\begin{equation}\label{eq:psi-normalization}
        \Psi(0,o)
        =
        H^{-1}
        \int_0^T\sum_{y\in D}p_{\sigma,D}(r,o,y)^q\nu(y)\,\dd r
        =1.
\end{equation}
Set
\[
        q':=\frac{q}{q-1},
        \qquad
        \phi:=\Psi^{q'}.
\]
Since $a\mapsto a^{q'}$ is convex and $\Delta_{\sigma,D}$ has nonnegative
off-diagonal coefficients,
\[
        \Delta_{\sigma,D}(\Psi^{q'})(x)
        \ge q'\Psi(x)^{q'-1}\Delta_{\sigma,D}\Psi(x).
\]
Consequently,
\begin{equation}\label{eq:chain-ineq}
        (-\partial_t-\Delta_{\sigma,D})\phi
        \le
        \frac{q'}{H}
        \Psi^{1/(q-1)}K^{q-1}.
\end{equation}

Let $v(t,x):=u(s+t,x)$.  Multiplying
$\partial_tv-\Delta_\sigma v\ge v^q$ by $\phi$ and summing over $D$ uses the
identity
\[
\begin{aligned}
\sum_{x\in D}(-\Delta_\sigma v)(x)\phi(x)\nu(x)
={}&
\sum_{x\in D}v(x)(-\Delta_{\sigma,D}\phi)(x)\nu(x)\\
&-
\sum_{\substack{x\in D,\ y\notin D\\x\sim y}}
\mu_{xy}v(y)\phi(x).
\end{aligned}
\]
The last term is nonpositive.  Integrating over $(0,T)$ and using
$\phi(T,\cdot)=0$ therefore gives
\begin{equation}\label{eq:testing-ibp}
\begin{split}
        \int_0^T\sum_{x\in D} v(t,x)^q\phi(t,x)\nu(x)\,\dd t
        &+\sum_{x\in D} v(0,x)\phi(0,x)\nu(x)\\
        &\le
        \int_0^T\sum_{x\in D}
        v(t,x)(-\partial_t-\Delta_{\sigma,D})\phi(t,x)\nu(x)\,\dd t.
\end{split}
\end{equation}
Combining \eqref{eq:chain-ineq} with H\"older's inequality yields
\begin{align*}
\text{right-hand side of }\eqref{eq:testing-ibp}
&\le
\frac{q'}{H}
\int_0^T\sum_{x\in D}
v\Psi^{1/(q-1)}K^{q-1}\nu\,\dd t\\
&\le
q'H^{-1/q}
\left(
\int_0^T\sum_{x\in D}v^q\Psi^{q'}\nu\,\dd t
\right)^{1/q}.
\end{align*}
If $A$ denotes the first term on the left of
\eqref{eq:testing-ibp} and $B$ the second, then
\[
        A+B\le q'H^{-1/q}A^{1/q}
        \le q'H^{-1/q}(A+B)^{1/q}.
\]
It follows that
\[
        A+B\le(q')^{q'}H^{-1/(q-1)}.
\]
Using \eqref{eq:psi-normalization}, $\phi(0,o)=1$, and retaining only the
$x=o$ term proves \eqref{eq:testing-bound}.
\end{proof}


\begin{corollary}\label{cor:heat-energy-liouville}
If, for some $o\in V$,
\[
        \int_0^\infty\sum_xp_\sigma(t,o,x)^q\nu(x)\,\dd t=\infty,
\]
then every nonnegative global classical supersolution of \eqref{eq:heat-ineq}
is identically zero.
\end{corollary}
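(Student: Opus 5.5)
The plan is to argue by contradiction from Lemma~\ref{lem:testing}, using the monotone approximation of the minimal heat kernel by Dirichlet heat kernels. Let $u$ be a nonnegative global classical supersolution of \eqref{eq:heat-ineq}, and suppose $u(s,x)>0$ for some $s>0$ and $x\in V$. First I will reduce to positivity at the given point $o$; then I will show that divergence of $\cH_{q,\sigma}(o)$ forces $u(s',o)=0$ for every $s'>0$.

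\emph{Reduction to the point $o$.} We may run Proposition~\ref{prop:supersolution-solution-equivalence} from the point where $U(s,x)>0$. It produces a Cauchy solution $w$ with $0<w(t,y)\le u(s+t,y)$ for all $t>0$ and $y\in V$. Taking $y=o$ gives $u(s+t,o)>0$ for every $t>0$. So it suffices to show that $u(s',o)=0$ for every $s'>0$.

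\emph{Divergence of the finite-cylinder energies.} Fix an increasing exhaustion $D_n\uparrow V$ by finite connected sets containing $o$, and let $T_n\uparrow\infty$. By domain monotonicity, $p_{\sigma,D_n}(t,o,x)\uparrow p_\sigma(t,o,x)$ for each $t$ and $x$; this is the defining property of the minimal heat kernel recalled above. The integrand
\[
\1_{[0,T_n]}(t)\,\1_{D_n}(x)\,p_{\sigma,D_n}(t,o,x)^q\nu(x)
\]
is therefore nondecreasing in $n$ and converges pointwise to $p_\sigma(t,o,x)^q\nu(x)$. Monotone convergence then gives
\[
\cH^\sigma_{D_n,T_n}(o)\uparrow\cH_{q,\sigma}(o)=\infty .
\]

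\emph{Conclusion.} Apply \eqref{eq:testing-bound} with $D=D_n$, $T=T_n$ and an arbitrary shift $s'>0$:
\[
u(s',o)\nu(o)\le (q')^{q'}\bigl(\cH^\sigma_{D_n,T_n}(o)\bigr)^{-1/(q-1)}\to0 .
\]
Since $\nu(o)>0$, this yields $u(s',o)=0$ for every $s'>0$. This contradicts the positivity established in the reduction step, so $u\equiv0$.

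The only point needing care is the monotone convergence in the second step, and it is standard for the minimal kernel. I therefore expect no real obstacle in this argument.
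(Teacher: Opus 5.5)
Your proof is correct. Its core is the same as the paper's: Lemma~\ref{lem:testing} applied along an exhaustion, with monotone convergence of the Dirichlet kernels. (Your diagonal choice $T_n\uparrow\infty$ is equivalent to the paper's iterated limit.)

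The difference is in how you pass from vanishing at $o$ to $u\equiv0$. The paper argues directly. Once $u(\cdot,o)\equiv0$ on $(0,\infty)$, one has $\partial_tu(s,o)=0$, and the inequality gives $0=u(s,o)^q\le-\Delta_\sigma u(s,o)=-\nu(o)^{-1}\sum_{y\sim o}\mu_{oy}u(s,y)\le0$. This forces every neighbor of $o$ to vanish, and connectedness propagates the zero. It is an elementary strong minimum principle that uses only the pointwise inequality.

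You go the other way and transfer positivity from an arbitrary point $x$ to $o$ via the quantitative part of Proposition~\ref{prop:supersolution-solution-equivalence}. This is legitimate and not circular, since that proposition is proved earlier and does not use the corollary. However, it brings in much heavier machinery than a two-line local argument needs: constructing Cauchy solutions on an exhaustion, comparison for cooperative systems, and positivity improvement of the Dirichlet semigroup.

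Your route does give a slightly stronger by-product: a supersolution that is positive somewhere at time $s$ is strictly positive everywhere at all later times. That is not needed here, and the paper's route keeps the corollary self-contained. A minor slip: in your reduction step you wrote $U(s,x)$ where you mean $u(s,x)$.
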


\begin{proof}
Let $D_j\uparrow V$ be an exhaustion by finite connected sets containing $o$.
Extend each killed kernel $p_{\sigma,D_j}(t,o,\cdot)$ by zero outside $D_j$.
These kernels increase pointwise to the minimal kernel
$p_\sigma(t,o,\cdot)$.  Hence
\[
        \lim_{j\to\infty}\lim_{T\to\infty}
        \cH_{D_j,T}^\sigma(o)=\infty.
\]
Lemma~\ref{lem:testing} gives $u(s,o)=0$ for every $s>0$.  Therefore
$\partial_tu(s,o)=0$, and \eqref{eq:heat-ineq} implies
$-\Delta_\sigma u(s,o)\ge0$.  On the other hand,
\[
        \Delta_\sigma u(s,o)
        =\frac1{\nu(o)}\sum_{y\sim o}\mu_{oy}u(s,y)\ge0.
\]
Thus every neighbor of $o$ vanishes at every positive time.  Connectedness
then gives $u\equiv0$.
\end{proof}

\subsection{Laplace--Hardy reduction}

For $\lambda>0$, define
\[
        G_\lambda^\sigma(o,x)
        :=\int_0^\infty e^{-\lambda t}p_\sigma(t,o,x)\,\dd t,
\]
and
\[
        L_\lambda^\nu(o)
        :=\sum_xG_\lambda^\sigma(o,x)^q\nu(x).
\]

\begin{lemma}\label{lem:laplace-hardy}
Let $q>1$.  There exists $c_q>0$ such that for every nonnegative
$f\in L^q(0,\infty)$,
\begin{equation*}
        \int_0^\infty f(t)^q\,\dd t
        \ge
        c_q\int_0^\infty\lambda^{q-2}
        \left(\int_0^\infty e^{-\lambda t}f(t)\,\dd t\right)^q
        \,\dd\lambda.
\end{equation*}
\end{lemma}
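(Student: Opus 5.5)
The plan is to reduce the inequality to the $L^q$-boundedness of a Hardy-type operator on $(0,\infty)$ with the multiplicative (scale-invariant) structure. Write $\mathcal Lf(\lambda):=\int_0^\infty e^{-\lambda t}f(t)\,\dd t$. The claim is then
\[
\int_0^\infty\lambda^{q-2}\bigl(\mathcal Lf(\lambda)\bigr)^q\,\dd\lambda\le C_q\int_0^\infty f(t)^q\,\dd t .
\]
Substitute $\lambda=1/s$. Since $\dd\lambda=s^{-2}\dd s$, the left side becomes
\[
\int_0^\infty s^{-q}\bigl(\mathcal Lf(1/s)\bigr)^q\,\dd s
=\int_0^\infty\Bigl(\frac1s\int_0^\infty e^{-t/s}f(t)\,\dd t\Bigr)^q\dd s .
\]
So it suffices to show that the operator $Tf(s):=s^{-1}\int_0^\infty e^{-t/s}f(t)\,\dd t$ is bounded on $L^q(0,\infty)$ for $q>1$. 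The inequality then holds with $c_q=\|T\|_{L^q\to L^q}^{-q}$.

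The kernel $k(s,t)=s^{-1}e^{-t/s}$ is homogeneous of degree $-1$: $k(as,at)=a^{-1}k(s,t)$. By the standard Schur test for such kernels (or Minkowski's integral inequality after the change of variable $t=su$), we have
\[
Tf(s)=\int_0^\infty e^{-u}f(su)\,\dd u .
\]
Hence, by Minkowski's inequality,
\[
\|Tf\|_{L^q}\le\int_0^\infty e^{-u}\|f(\cdot\,u)\|_{L^q}\,\dd u=\|f\|_{L^q}\int_0^\infty e^{-u}u^{-1/q}\,\dd u=\Gamma(1-1/q)\,\|f\|_{L^q}.
\]
This is finite precisely because $q>1$. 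Thus one may take $c_q=\Gamma(1-1/q)^{-q}$.

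The only step requiring care is the convergence of $\int_0^\infty e^{-u}u^{-1/q}\,\dd u$ near $u=0$, which is exactly where $q>1$ enters. One also needs measurability and Tonelli for nonnegative $f$, so that Minkowski applies without any a priori finiteness; both are routine. I do not expect a genuine obstacle; the main thing to get right is the exponent bookkeeping in the substitution $\lambda=1/s$, which must produce exactly the weight $\lambda^{q-2}$.
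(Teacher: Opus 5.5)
Your proof is correct. It shares the paper's first step, the substitution $\lambda=s^{-1}$ reducing the claim to $L^q$-boundedness of $Tf(s)=s^{-1}\int_0^\infty e^{-t/s}f(t)\,\dd t$, but proves that bound by a different route.

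\textbf{The paper's route.} It dominates $e^{-t/s}$ by a dyadic step profile. This gives
$Tf(s)\le s^{-1}\int_0^s f+\sum_{k\ge0}e^{-2^k}B_kf(s)$, where $B_kf(s)=s^{-1}\int_{2^ks}^{2^{k+1}s}f$. It then quotes Hardy's inequality for the first term. For the second, a H\"older--Tonelli estimate gives $\|B_kf\|_{L^q}\le 2^{k(1-1/q)}(\log2)^{1/q}\|f\|_{L^q}$, and this growth is absorbed by $e^{-2^k}$.

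\textbf{Your route.} You use the dilation invariance of the kernel directly. The substitution $t=su$ gives $Tf(s)=\int_0^\infty e^{-u}f(su)\,\dd u$. Minkowski's integral inequality, with $\|f(\cdot\,u)\|_{L^q}=u^{-1/q}\|f\|_{L^q}$, then gives $\|T\|_{L^q\to L^q}\le\Gamma(1-1/q)$ in one line. Note that this identity comes from the change of variable, not from the Schur test, as your sentence slightly suggests.

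\textbf{Comparison.} Your argument is shorter and self-contained. It needs no external Hardy inequality; indeed, Hardy's inequality is usually proved by exactly this dilation--Minkowski device. It also gives the explicit constant $c_q=\Gamma(1-1/q)^{-q}$. This is optimal, because for a nonnegative kernel homogeneous of degree $-1$ the $L^q$ norm equals $\int_0^\infty e^{-u}u^{-1/q}\,\dd u$. The paper's decomposition yields only an unspecified $c_q$, and its advantage is just that it reduces to a familiar classical inequality. Since the value of $c_q$ is never used later in the paper, either argument serves equally well for Corollary~\ref{cor:resolvent-reduction}.
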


\begin{proof}
After the change of variables $\lambda=s^{-1}$, the right-hand side, without the factor $c_q$,
equals
\[
        \int_0^\infty
        \left(\frac1s\int_0^\infty e^{-t/s}f(t)\,\dd t\right)^q\,\dd s.
\]
It is enough to show that
\[
        Tf(s):=\frac1s\int_0^\infty e^{-t/s}f(t)\,\dd t
\]
is bounded on $L^q(0,\infty)$.  Decompose dyadically:
\[
Tf(s)
\le
\frac1s\int_0^sf(t)\,\dd t
+\sum_{k=0}^\infty e^{-2^k}\frac1s
\int_{2^ks}^{2^{k+1}s}f(t)\,\dd t.
\]
The first term is the Hardy averaging operator.  The $k$-th remaining term is
denoted by
\[
        B_kf(s):=\frac1s\int_{2^ks}^{2^{k+1}s}f(t)\,\dd t.
\]
H\"older's inequality and Tonelli's theorem give
\[
        \|B_kf\|_{L^q}^q
        \le 2^{k(q-1)}\log2\,\|f\|_{L^q}^q.
\]
Thus the $k$-th summand is bounded in $L^q$ by
$e^{-2^k}2^{k(1-1/q)}(\log2)^{1/q}\|f\|_{L^q}$.  Hardy's
inequality and summability in $k$ complete the proof.
\end{proof}

Apply Lemma~\ref{lem:laplace-hardy} to
$f_x(t)=p_\sigma(t,o,x)\1_{(0,T)}(t)$.  Sub-Markovianity and symmetry give
$p_\sigma(t,o,x)\le\nu(x)^{-1}$, so $f_x\in L^q(0,\infty)$.  Sum the resulting
inequalities with respect to $\nu$, use Tonelli's theorem, and then let
$T\to\infty$.  Monotone convergence on both sides gives:

\begin{corollary}\label{cor:resolvent-reduction}
For every $q>1$, there exists $c_q>0$ such that
\[
        \cH_{q,\sigma}(o)
        \ge
        c_q\int_0^\infty\lambda^{q-2}L_\lambda^\nu(o)\,\dd\lambda.
\]
\end{corollary}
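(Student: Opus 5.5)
The statement is Corollary~\ref{cor:resolvent-reduction}. The plan is to apply Lemma~\ref{lem:laplace-hardy} pointwise in $x$ with a truncated kernel, sum over $x$ against $\nu$, and then remove the truncation by monotone convergence.

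\emph{Truncated inequality.} Fix $o$ and $T>0$. For each $x\in V$ put
\[
f_x(t):=p_\sigma(t,o,x)\1_{(0,T)}(t).
\]
By symmetry and sub-Markovianity, $p_\sigma(t,o,x)\nu(x)=p_\sigma(t,x,o)\nu(x)\le\sum_y p_\sigma(t,o,y)\nu(y)\le 1$. Hence $0\le f_x\le\nu(x)^{-1}\1_{(0,T)}$, so $f_x\in L^q(0,\infty)$ and Lemma~\ref{lem:laplace-hardy} applies. It gives
\[
\int_0^T p_\sigma(t,o,x)^q\,\dd t\ \ge\ c_q\int_0^\infty\lambda^{q-2}\Bigl(\int_0^T e^{-\lambda t}p_\sigma(t,o,x)\,\dd t\Bigr)^q\dd\lambda .
\]
Multiply by $\nu(x)$ and sum over $x\in V$. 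All integrands are nonnegative, so Tonelli's theorem lets us interchange the sum with both integrals. This yields
\[
\int_0^T\sum_x p_\sigma(t,o,x)^q\nu(x)\,\dd t\ \ge\ c_q\int_0^\infty\lambda^{q-2}\sum_x G^{(T)}_\lambda(o,x)^q\nu(x)\,\dd\lambda,
\]
where $G^{(T)}_\lambda(o,x):=\int_0^T e^{-\lambda t}p_\sigma(t,o,x)\,\dd t$.

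\emph{Removing the truncation.} Let $T\uparrow\infty$. The left side increases to $\cH_{q,\sigma}(o)$, which may be $+\infty$. On the right, $G^{(T)}_\lambda(o,x)$ increases to $G^\sigma_\lambda(o,x)$, so its $q$-th power increases as well. Monotone convergence, applied first to the sum over $x$ and then to the $\lambda$-integral, shows that the right side increases to $c_q\int_0^\infty\lambda^{q-2}L^\nu_\lambda(o)\,\dd\lambda$. This gives the claimed inequality.

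\emph{Main point of care.} The only delicate step is justifying the truncation: Lemma~\ref{lem:laplace-hardy} requires $f\in L^q$, and $\cH_{q,\sigma}(o)$ may be infinite. The truncation at time $T$ together with the bound $p_\sigma\le\nu(x)^{-1}$ handles this, and the constant $c_q$ is the one from Lemma~\ref{lem:laplace-hardy}, independent of $x$ and $T$.
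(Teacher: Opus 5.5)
Your proposal is correct and follows the paper's own argument essentially verbatim. You truncate to $f_x=p_\sigma(\cdot,o,x)\1_{(0,T)}$, use $p_\sigma(t,o,x)\le\nu(x)^{-1}$ to place $f_x$ in $L^q$, apply Lemma~\ref{lem:laplace-hardy}, sum against $\nu$ with Tonelli, and let $T\to\infty$ by monotone convergence on both sides. (Minor aside: the symmetry step is unnecessary, since $p_\sigma(t,o,x)\nu(x)$ is already a single term of the sub-Markov sum.)
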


\section{Flow decomposition and resolvent-capacity estimates}

\subsection{The resolvent network with metric killing edges}

We next prove a resolvent-capacity lower bound for $L_\lambda^\nu(o)$.  The
argument is first carried out on a finite set and then passed to the limit.
The main point is to incorporate both the intrinsic geometry and the killing
term into one path metric.  Every retained graph edge keeps its intrinsic
length $\rho$, while a killing edge is given length $\lambda^{-1/2}$.  With
this choice every current path has enough length on the natural resolvent
scale, including paths killed before they leave an intrinsic ball.

Let $D\subset V$ be finite and connected with $o\in D$, and let
\[
        g(x):=G_{\lambda,D}^\sigma(o,x)
        =\int_0^\infty e^{-\lambda t}p_{\sigma,D}(t,o,x)\,\dd t,
        \qquad x\in D.
\]
Extend $g$ by zero outside $D$.  Then
\begin{equation}\label{eq:finite-resolvent-equation}
        \sum_{y\sim x}\mu_{xy}\bigl(g(x)-g(y)\bigr)
        +\lambda\nu(x)g(x)
        =\1_{\{o\}}(x),
        \qquad x\in D.
\end{equation}

We realize \eqref{eq:finite-resolvent-equation} on a finite augmented network.
For every boundary edge $e=\{x,y\}$ with $x\in D$ and $y\notin D$, introduce
a terminal copy $b_e$ and replace $e$ by an edge $\{x,b_e\}$ of conductance
$\mu_{xy}$.  For every $x\in D$, introduce a killing terminal $k_x$ and an
edge $\{x,k_x\}$ of conductance $\lambda\nu(x)$.  Let $\mathcal T_D$ denote
the collection of all boundary and killing terminals.  Extend $g$ to the
augmented network by setting $g=0$ on $\mathcal T_D$.  The finite-domain
strong maximum principle gives
\[
        g(x)>0,\qquad x\in D.
\]
A retained graph edge $e=\{x,y\}$ and its copied boundary edge are assigned
length $\rho_e:=\rho(x,y)$, whereas every killing edge is assigned length
\[
        \ell_\lambda:=\lambda^{-1/2}.
\]
Define a radial label on the augmented vertices by
\[
\mathfrak r(z)=
\begin{cases}
 d_\rho(o,z),&z\in D,\\
 d_\rho(o,y),&z=b_e,\ e=\{x,y\},\\
 d_\rho(o,x)+\lambda^{-1/2},&z=k_x.
\end{cases}
\]
The path-metric inequality gives
$|d_\rho(o,x)-d_\rho(o,y)|\le d_\rho(x,y)\le\rho(x,y)$ on every graph
edge.  Thus the radial increment across every retained or copied edge is
bounded by its assigned length, and the same is true by definition for a
killing edge.

Orient every edge with nonzero voltage drop from the larger value of $g$ to
the smaller one and set
\[
        \theta_e=c_e\bigl(g(e^-)-g(e^+)\bigr),
\]
where $c_e$ is its conductance.  Equation
\eqref{eq:finite-resolvent-equation} says that $\theta$ is a unit acyclic flow
from $o$ to $\mathcal T_D$.  The standard flow-decomposition theorem yields a
finite atomic probability measure on directed source-to-terminal paths; see,
for example, \cite{FordFulkerson62}.  Hence there is a probability measure
$\Prob$ on directed voltage-decreasing paths
\[
        \gamma=(x_0=o,e_0,x_1,\ldots,e_{m-1},x_m),
        \qquad x_m\in\mathcal T_D,
\]
such that
\begin{equation}\label{eq:path-decomposition}
        \Prob(\gamma\text{ uses }e)=\theta_e
        \qquad\text{for every directed edge }e.
\end{equation}

For a sampled path write
\[
        V_i:=g(x_i),\qquad
        \delta_i:=V_i-V_{i+1}>0,\qquad
        \ell_i:=\text{length of }e_i,
\]
and
\[
        s_i:=\sum_{j=0}^{i-1}\ell_j,
        \qquad
        L_\gamma:=s_m.
\]
Here $V_m=0$.  Since the radial increment along an edge is at most its length,
\begin{equation}\label{eq:radial-prefix-bound}
        \mathfrak r(x_i)\le s_i,
        \qquad 0\le i\le m.
\end{equation}
In particular, if $D\supset B_\rho(o,R)$ and $R<\lambda^{-1/2}$, then every
terminal has radial label larger than $R$, and hence
\begin{equation}\label{eq:path-length-resolvent-scale}
        L_\gamma>R.
\end{equation}

Define the path energy
\[
        H_\gamma:=\sum_{i=0}^{m-1}
        \frac{\ell_i^2V_i^q}{\delta_i}.
\]

\begin{lemma}\label{lem:path-energy-representation}
For every $\lambda>0$,
\begin{equation*}
        \E H_\gamma
        \le 2\sum_{x\in D}g(x)^q\nu(x).
\end{equation*}
\end{lemma}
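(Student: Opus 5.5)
We compute $\E H_\gamma$ edge by edge using \eqref{eq:path-decomposition}. Since $H_\gamma$ is a sum over the directed edges of $\gamma$ of the quantity $\ell_e^2 g(e^-)^q/(g(e^-)-g(e^+))$, which depends only on the edge $e$ and not on the rest of the path, linearity and \eqref{eq:path-decomposition} give
\[
\E H_\gamma=\sum_{e}\theta_e\,\frac{\ell_e^2\,g(e^-)^q}{g(e^-)-g(e^+)}
=\sum_e c_e\,\ell_e^2\,g(e^-)^q,
\]
where the sum runs over directed edges with nonzero voltage drop. Here we used $\theta_e=c_e(g(e^-)-g(e^+))$, so the voltage drops cancel exactly. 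Acyclicity ensures that each edge is traversed at most once by each path, so no multiplicity issue arises.

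Next we split the sum by edge type and charge each edge to its upper endpoint $e^-$, which always lies in $D$, since terminals carry voltage $0$ and are sinks.

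For a killing edge $\{x,k_x\}$, the contribution is $\lambda\nu(x)\cdot\lambda^{-1}\cdot g(x)^q=\nu(x)g(x)^q$. Summing over $x\in D$ gives at most $\sum_{x\in D}g(x)^q\nu(x)$.

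For retained and copied boundary edges, each graph edge $\{x,y\}$ is charged to its endpoint $e^-\in D$ carrying the larger voltage. Its contribution $\mu_{xy}\rho(x,y)^2g(e^-)^q$ is therefore bounded by assigning it to the vertex $x\in D$ with $g(x)=\max$. Each vertex $x\in D$ then receives at most
\[
g(x)^q\sum_{y\sim x}\mu_{xy}\rho(x,y)^2\le g(x)^q\nu(x)
\]
by the adaptedness condition \eqref{eq:adapted-weight}. This counts each undirected edge once, copied boundary edges included, since $y$ ranges over all graph neighbours of $x$. Summing over $x$ gives at most $\sum_{x\in D} g(x)^q\nu(x)$ again.

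Adding the two contributions yields the factor $2$.

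The only delicate point is justifying the interchange $\E\sum_{i}=\sum_e\Prob(\gamma\text{ uses }e)\cdot(\cdot)$. This needs that the summand is a deterministic function of the directed edge and that each path uses an edge at most once. Both hold because the paths are strictly voltage-decreasing, so $\delta_i>0$ and the paths are simple. Everything is a finite sum, so no convergence issues arise.
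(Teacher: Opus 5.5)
Your proof is correct and follows the paper's argument: the path-decomposition identity turns $\E H_\gamma$ into $\sum_e c_e\ell_e^2 g(e^-)^q$, adaptedness \eqref{eq:adapted-weight} bounds the graph-edge contributions at each $x\in D$ by $\nu(x)g(x)^q$, and each killing edge contributes exactly $\nu(x)g(x)^q$. Your extra remarks make explicit two points the paper leaves implicit: the paths are simple because they are strictly voltage-decreasing, and every oriented edge has its tail $e^-$ in $D$ because $g=0$ on terminals while $g>0$ on $D$.
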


\begin{proof}
Using \eqref{eq:path-decomposition},
\[
\E H_\gamma
=
\sum_e\theta_e\frac{\ell_e^2g(e^-)^q}{g(e^-)-g(e^+)}
=
\sum_e c_e\ell_e^2g(e^-)^q.
\]
At a vertex $x\in D$, the total contribution of retained and copied
graph edges is at most
\[
        g(x)^q\sum_{y\sim x}\mu_{xy}\rho(x,y)^2
        \le \nu(x)g(x)^q
\]
by \eqref{eq:adapted-weight}.  The killing edge contributes
\[
        \lambda\nu(x)\lambda^{-1}g(x)^q=\nu(x)g(x)^q.
\]
Summing over $x$ proves the claim.
\end{proof}

\subsection{Hardy inequality along the current paths}

Let $v_\gamma:[0,L_\gamma]\to[0,\infty)$ be the piecewise affine function
satisfying $v_\gamma(s_i)=V_i$.

\begin{lemma}\label{lem:path-hardy}
Let $q>1$.  If $v:[0,L]\to[0,\infty)$ is continuous, strictly decreasing,
and affine on each interval of a finite partition, then
\begin{equation}\label{eq:path-hardy}
        \int_0^L\frac{v(s)^q}{-v'(s)}\,\dd s
        \ge
        \frac{q-1}{4\log2}
        \int_0^L s\,v(s)^{q-1}\,\dd s.
\end{equation}
\end{lemma}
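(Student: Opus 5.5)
The plan is to discretise by dyadic levels of $v$ and compare both sides level by level. Write $a_0:=v(0)$, $a_k:=a_02^{-k}$, and let $t_k$ be the unique point with $v(t_k)=a_k$, setting $t_k:=L$ once $a_k\le v(L)$. Strict monotonicity and continuity make these points well defined. Put $I_k:=[t_k,t_{k+1}]$ and $\Delta_k:=t_{k+1}-t_k$, so that $t_{k+1}=\sum_{j\le k}\Delta_j$. The intervals $I_k$ cover $[0,L]$, possibly accumulating at $L$ when $v(L)=0$, and $v$ is absolutely continuous. On $I_k$ we have $a_k/2\le v\le a_k$. Because $v$ is piecewise affine and strictly decreasing, $-v'>0$ almost everywhere, so both integrands make sense.

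\emph{Lower bound on each $I_k$.} By Cauchy--Schwarz,
\[
\Delta_k^2\le\int_{I_k}\frac{\dd s}{-v'(s)}\int_{I_k}(-v'(s))\,\dd s
\le \frac{a_k}{2}\int_{I_k}\frac{\dd s}{-v'(s)},
\]
since the total drop of $v$ on $I_k$ is at most $a_k/2$. Using $v\ge a_k/2$ on $I_k$, this gives
\[
\int_{I_k}\frac{v^q}{-v'}\,\dd s\ge\Bigl(\frac{a_k}{2}\Bigr)^{q-1}\Delta_k^2 .
\]

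\emph{Upper bound on each $I_k$.} Using $v\le a_k$ and $s\le t_{k+1}$ on $I_k$,
\[
\int_{I_k}s\,v^{q-1}\,\dd s\le a_k^{q-1}\Delta_k\sum_{j\le k}\Delta_j .
\]

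\emph{Summation.} This is the main step. With $w_k:=a_k^{q-1}=a_0^{q-1}r^{2k}$ and $r:=2^{-(q-1)/2}$, we have $w_k=\sqrt{w_kw_j}\,r^{k-j}$ for $j\le k$. Hence
\[
\sum_k\sum_{j\le k}w_k\Delta_k\Delta_j=\sum_{j\le k}r^{k-j}\bigl(\sqrt{w_k}\Delta_k\bigr)\bigl(\sqrt{w_j}\Delta_j\bigr)\le\frac{1}{1-r}\sum_kw_k\Delta_k^2,
\]
by Schur's test (or Young's inequality for convolution with the geometric sequence $r^{n}$). Combining with the lower bound, which carries a factor $2^{q-1}$ relative to $w_k\Delta_k^2$, gives
\[
\int_0^L s\,v^{q-1}\,\dd s\le\frac{2^{q-1}}{1-2^{-(q-1)/2}}\int_0^L\frac{v^q}{-v'}\,\dd s .
\]
Since $1-2^{-(q-1)/2}\ge c\,\min\{1,(q-1)\log2\}$, this has the shape $(q-1)/\log2$ claimed in \eqref{eq:path-hardy}.

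\emph{Main obstacle.} The difficulty is the precise constant $\frac{q-1}{4\log2}$ uniformly in $q$. For large $q$, the factor $2^{q-1}$ coming from replacing $v$ by $a_k/2$ is too lossy. If the dyadic argument does not deliver the stated constant, I would redo the level decomposition with a ratio adapted to $q$, using levels $a_k=a_0\beta^{-k}$ with $\beta=2^{1/(q-1)}$, so that $v^{q-1}$ changes by exactly a factor $2$ per level. Equivalently, I would run the same argument on $w:=v^{q-1}$ with dyadic levels of $w$, noting that
\[
\frac{v^q}{-v'}=(q-1)\,\frac{w^2}{-w'}.
\]
This gives a factor $q-1$ directly. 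The remaining Hardy inequality $\int w^2/(-w')\ge c\int s\,w$ then has an absolute constant, and the bookkeeping above (Cauchy--Schwarz, drop at most $w_k/2$, Schur with ratio $1/\sqrt2$) yields $c=1/(4\log 2)$ or an absolute constant of that order.
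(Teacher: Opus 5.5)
Your level-set decomposition is a legitimate alternative to the paper's argument. As written, however, it does not prove the lemma with the stated constant, and your closing claim that the bookkeeping ``yields $c=1/(4\log2)$'' is not correct.

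Carrying your steps through, the first version gives the constant $2^{1-q}\bigl(1-2^{-(q-1)/2}\bigr)$. This is exponentially small for large $q$, and it lies below $\frac{q-1}{4\log2}$ for every $q>1$: even as $q\downarrow1$ it behaves like $\frac{\log2}{2}(q-1)$, and $(\log2)^2<\frac12$.

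The repaired version is set up correctly. The identity $v^q/(-v')=(q-1)w^2/(-w')$ holds, and your three steps go through for $w=v^{q-1}$, which is absolutely continuous with $-w'>0$ a.e. But with dyadic levels of $w$ you get $\int_{I_k}w^2/(-w')\ge\frac{w_k}{2}\Delta_k^2$ and Schur constant $(1-2^{-1/2})^{-1}$, hence only
\[
\int_0^L\frac{v^q}{-v'}\,\dd s\ge\frac{1-2^{-1/2}}{2}\,(q-1)\int_0^Ls\,v^{q-1}\,\dd s,
\qquad \frac{1-2^{-1/2}}{2}\approx0.146<\frac1{4\log2}\approx0.361 .
\]
This has the right shape and would suffice downstream, where only some $c_q>0$ is used. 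It is not, however, the lemma as stated.

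Note also that running the $v$-bookkeeping with levels of ratio $2^{1/(q-1)}$ is not ``equivalent'' to the $w$-version. On that partition, Cauchy--Schwarz against $-v'$ gives the constant $\frac{1-2^{-1/2}}{2(2^{1/(q-1)}-1)}$, which collapses as $q\downarrow1$. Only the $w$-version gives a uniform factor $q-1$.

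The missing adjustment is to use geometric levels $w_k=w(0)\beta^{-k}$ with ratio $\beta$ close to $1$ instead of $2$. The same three steps give $\int_{I_k}w^2/(-w')\ge w_k\Delta_k^2/(\beta(\beta-1))$ and a Schur bound with ratio $\beta^{-1/2}$, so
\[
\int_0^Ls\,w\,\dd s\le\frac{\beta(\beta-1)}{1-\beta^{-1/2}}\int_0^L\frac{w^2}{-w'}\,\dd s .
\]
The factor decreases to $2$ as $\beta\downarrow1$. Already $\beta=11/10$ gives about $2.364<4\log2\approx2.773$, which proves the lemma; in the limit you get any constant below $\frac{q-1}{2}$.

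For comparison, the paper works in the domain rather than in the range of $v$. With $a=-v'/v^q$ and $A(t)=\int_0^ta$, Cauchy--Schwarz on $[t/2,t]$ gives $\int_{t/2}^t\dd s/a\ge t^2/(4A(t))$. Integrating against $\dd t/t$ produces the $\log2$, and the explicit antiderivative bound $A(t)\le v(t)^{1-q}/(q-1)$ produces the factor $q-1$ in one line. Your range decomposition with a discrete Schur test is longer, but once the ratio is tuned it gives a slightly better constant.
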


\begin{proof}
Set
\[
        a(s):=\frac{-v'(s)}{v(s)^q},
        \qquad
        A(t):=\int_0^t a(s)\,\dd s.
\]
For $0<t<L$, Cauchy's inequality gives
\[
        \int_{t/2}^t\frac{\dd s}{a(s)}
        \ge \frac{t^2}{4A(t)}.
\]
Dividing by $t$, integrating in $t$, and changing the order of
integration,
\[
        \int_0^L\frac{t}{A(t)}\,\dd t
        \le 4\log2\int_0^L\frac{\dd s}{a(s)}.
\]
Moreover,
\[
        A(t)
        =\int_0^t\frac{-v'(s)}{v(s)^q}\,\dd s
        \le \frac{v(t)^{1-q}}{q-1}.
\]
Thus $A(t)^{-1}\ge(q-1)v(t)^{q-1}$, which proves
\eqref{eq:path-hardy}.
\end{proof}

On $(s_i,s_{i+1})$ one has $-v_\gamma'=\delta_i/\ell_i$ and
$v_\gamma\le V_i$.  Therefore Lemma~\ref{lem:path-hardy} gives
\begin{equation}\label{eq:path-energy-controls-hardy}
        H_\gamma
        \ge
        \int_0^{L_\gamma}\frac{v_\gamma(s)^q}{-v_\gamma'(s)}\,\dd s
        \ge
        c_q\int_0^{L_\gamma}s\,v_\gamma(s)^{q-1}\,\dd s.
\end{equation}

For $0<r<R$, where $D\supset B_\rho(o,R)$ and $R<\lambda^{-1/2}$, define
\[
        \kappa_r
        :=\min\{0\le i\le m-1:\mathfrak r(x_{i+1})>r\},
        \qquad
        Z_r(\gamma):=V_{\kappa_r}.
\]
The index is well-defined because $\mathfrak r(x_m)>R>r$.  By minimality,
\begin{equation}\label{eq:first-exit-location}
        \mathfrak r(x_{\kappa_r})\le r
        <\mathfrak r(x_{\kappa_r+1}).
\end{equation}
Thus $Z_r$ is the voltage at the tail of the first edge by which the augmented
path leaves the radial ball of radius $r$.

\begin{lemma}\label{lem:radial-first-exit-hardy}
There exists $c_q>0$, depending only on $q$, such that
\begin{equation*}
        H_\gamma
        \ge c_q\int_0^R r\,Z_r(\gamma)^{q-1}\,\dd r.
\end{equation*}
\end{lemma}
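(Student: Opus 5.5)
We start from the already established bound \eqref{eq:path-energy-controls-hardy},
\[
H_\gamma\ge c_q\int_0^{L_\gamma}s\,v_\gamma(s)^{q-1}\,\dd s,
\]
and compare its right-hand side with $\int_0^R r\,Z_r^{q-1}\,\dd r$.

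\emph{Key pointwise bound.} For each $r\in(0,R)$ we show that $Z_r(\gamma)$ is controlled from above by values of $v_\gamma$ at arclength parameters at least $r$. Write $i=\kappa_r$. By \eqref{eq:radial-prefix-bound} and \eqref{eq:first-exit-location},
\[
s_{i+1}\ge\mathfrak r(x_{i+1})>r .
\]
Since $v_\gamma$ is decreasing, it is largest at the left end of any interval, so the natural first attempt is to integrate $v_\gamma^{q-1}$ over an interval beginning near $s_i$. The difficulty is that $s_i$ itself may be much smaller than $r$, and $v_\gamma$ may drop substantially across the edge $e_i$. So we cannot bound $Z_r=V_i$ directly by $v_\gamma(s)$ for $s\ge r$.

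\emph{Handling the exit edge.} We deal with this using the exact edge term of $H_\gamma$ rather than only its Hardy lower bound. Split $H_\gamma$ into two pieces:
\begin{itemize}
\item the edge contributions $\ell_j^2V_j^q/\delta_j$;
\item the piecewise Hardy integral.
\end{itemize}
For the exit edge, consider two cases.
\begin{itemize}
\item If $\delta_i\le V_i/2$, then $v_\gamma\ge V_i/2$ on all of $[s_i,s_{i+1}]$. This interval contains $[s_i,r]$ together with a piece beyond $r$ of length $s_{i+1}-r$.
\item If $\delta_i>V_i/2$, the edge energy satisfies $\ell_i^2V_i^q/\delta_i\ge \ell_i^2V_i^{q-1}$. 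Since $\ell_i\ge s_{i+1}-s_i$, this is at least $(s_{i+1}-s_i)^2V_i^{q-1}$.
\end{itemize}
Cleaner is a uniform statement. On an affine edge with $V_{i+1}<V_i$,
\[
\frac{\ell^2V_i^q}{\delta}\ \ge\ c\,V_i^{q-1}\bigl(s_{i+1}^2-s_i^2\bigr)
\]
whenever $\delta\ge V_i/2$, while otherwise $v\ge V_i/2$ on the whole edge.

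\emph{Integrating over $r$.} Fix an edge $j$ and let $I_j$ be the set of $r\in(0,R)$ with $\kappa_r=j$. Then $I_j\subset[\mathfrak r(x_j),\mathfrak r(x_{j+1}))\subset[0,s_{j+1})$. The contribution of edge $j$ is therefore
\[
\int_{I_j} r\,V_j^{q-1}\,\dd r\le \tfrac12 s_{j+1}^2V_j^{q-1}.
\]
This loses against a telescoping only if many $r$-intervals are charged to edges with small $s$. But distinct $j$ have disjoint $I_j$, and $Z_r$ is nonincreasing in $r$ along the exit order.

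It is cleaner to bound $\int_0^R rZ_r^{q-1}\,\dd r$ by
\[
\sum_j V_j^{q-1}\bigl(s_{j+1}^2-s_j^2\bigr)+\sum_j(\text{exit-edge terms}).
\]
The first sum compares with $\int_0^{L_\gamma} s\,v_\gamma^{q-1}\,\dd s$ on edges where $V_{j+1}\ge V_j/2$. On edges with a large drop it compares with the edge energy above. Here we use that each $r\in I_j$ satisfies $r<s_{j+1}$, and for $r\ge s_j$ one has $V_j\ge v_\gamma(r)$ only up to that drop.

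\emph{Main obstacle.} The hardest step is making this charging argument rigorous. Specifically, $\int_{I_j}r\,\dd r$ must be charged either to $[s_j,s_{j+1}]$, with $v_\gamma\gtrsim V_j$ there, or to the single edge energy. In the first attempt I would use the dichotomy $V_{j+1}\ge V_j/2$ versus $V_{j+1}<V_j/2$ edge by edge, together with the bound $r\le s_{j+1}$. Summing these estimates against $H_\gamma$, via \eqref{eq:path-energy-controls-hardy} and the term-wise definition of $H_\gamma$, then yields the claim with a new $c_q$.
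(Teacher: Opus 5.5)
\textbf{There is a genuine gap.} Your proposal stops exactly where the work lies: the charging argument is never carried out, and the ``uniform statement'' you offer to close it is false. If $\delta_i\ge V_i/2$, then $\ell_i^2V_i^q/\delta_i\le 2\ell_i^2V_i^{q-1}$. On the other hand, $s_{i+1}^2-s_i^2=\ell_i(2s_i+\ell_i)$. So the inequality $\ell_i^2V_i^q/\delta_i\ge c\,V_i^{q-1}(s_{i+1}^2-s_i^2)$ fails whenever $s_i\gg\ell_i$. For example, take $\ell_i=1$, $\delta_i=V_i$ and $s_i=S\to\infty$: the left side is $V_i^{q-1}$, while the right side is $c(2S+1)V_i^{q-1}$.

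The underlying issue is that one edge energy can pay only for radii $r\lesssim\ell_i$. When the exit edge starts far along the path, the radii charged to it must be paid by the Hardy integral $J_\gamma=\int_0^{L_\gamma}s\,v_\gamma(s)^{q-1}\,\dd s$ over the part of the path before that edge, not by the edge itself. For this reason, splitting by the size of the drop ($V_{i+1}\ge V_i/2$ versus $V_{i+1}<V_i/2$) is the wrong case distinction. Even in the large-drop case it leaves the regime $s_i\gg\ell_i$ unhandled.

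\textbf{The missing idea} is to split the radii by the arclength position of the exit edge's tail, not by the voltage drop across it. Set $E_1=\{0<r<R: s_{\kappa_r}\ge r/2\}$ and $E_2=\{0<r<R: s_{\kappa_r}<r/2\}$.
\begin{itemize}
\item On $E_1$, monotonicity gives $Z_r=v_\gamma(s_{\kappa_r})\le v_\gamma(r/2)$. The substitution $r=2s$ then yields $\int_{E_1}rZ_r^{q-1}\,\dd r\le 4\int_0^{R/2}s\,v_\gamma(s)^{q-1}\,\dd s\le 4J_\gamma$.
\item On $E_2$ with $\kappa_r=i$, we have $r<\mathfrak r(x_{i+1})\le s_{i+1}=s_i+\ell_i<r/2+\ell_i$, hence $r<2\ell_i$. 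Using only $\delta_i\le V_i$, this gives $\int_{\{r\in E_2:\kappa_r=i\}}rZ_r^{q-1}\,\dd r\le 2\ell_i^2V_i^{q-1}\le 2\ell_i^2V_i^q/\delta_i$. Summing over $i$ bounds this part by $2H_\gamma$.
\end{itemize}
No condition on the size of $\delta_i$ is needed anywhere.

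Your intermediate reduction $\int_0^R rZ_r^{q-1}\,\dd r\le\frac12\sum_j V_j^{q-1}(s_{j+1}^2-s_j^2)$ is actually valid, because $r<s_{\kappa_r+1}$ forces $Z_r$ to be at most the voltage at the start of the segment containing $r$. However, controlling the cross terms $2s_j\ell_jV_j^{q-1}$ on large-drop edges from there would need a further argument, such as the geometric decay of $V$ along successive large-drop edges. You do not supply that argument.
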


\begin{proof}
By \eqref{eq:path-length-resolvent-scale}, one has $R<L_\gamma$.  Put
\[
        J_\gamma:=\int_0^{L_\gamma}s\,v_\gamma(s)^{q-1}\,\dd s.
\]
By \eqref{eq:path-energy-controls-hardy}, $H_\gamma\ge c_qJ_\gamma$.
Split the radii into
\[
E_1:=\{0<r<R:s_{\kappa_r}\ge r/2\},
\qquad
E_2:=\{0<r<R:s_{\kappa_r}<r/2\}.
\]
If $r\in E_1$, monotonicity gives
$Z_r=v_\gamma(s_{\kappa_r})\le v_\gamma(r/2)$, and hence
\[
        \int_{E_1}rZ_r^{q-1}\,\dd r
        \le 4\int_0^{R/2}s\,v_\gamma(s)^{q-1}\,\dd s
        \le4J_\gamma.
\]
If $r\in E_2$ and $\kappa_r=i$, then
\eqref{eq:radial-prefix-bound} and \eqref{eq:first-exit-location} imply
$r<2\ell_i$.  Writing $E_{2,i}:=\{r\in E_2:\kappa_r=i\}$ and using
$\delta_i\le V_i$, we obtain
\[
        \int_{E_{2,i}}rZ_r^{q-1}\,\dd r
        \le 2\ell_i^2V_i^{q-1}
        \le 2\frac{\ell_i^2V_i^q}{\delta_i}.
\]
Summing over $i$ and using $J_\gamma\le C_qH_\gamma$ proves the result.
\end{proof}

\subsection{Stopped flows and resolvent capacities}

We use the following stopped-flow estimate in the augmented network.  For
disjoint nonempty vertex sets $A$ and $K$ in a finite electrical network, we
write $\Ceff(A,K)$ for the effective conductance, normalized as the minimum
Dirichlet energy among potentials equal to $1$ on $A$ and $0$ on $K$.

\begin{lemma}\label{lem:reciprocal-stopped-flow}
Suppose that the network carries an acyclic unit current $\theta=i_g$
generated by a nonnegative voltage $g$, with path decomposition $\Prob$.  On
each sampled path choose a segment beginning at a vertex in $A$ and ending at
a later vertex in $K$, and let $\tau<\zeta$ denote its initial and terminal
indices.  Then
\begin{equation*}
        \E\frac1{g(x_\tau)}
        \le \Ceff(A,K).
\end{equation*}
\end{lemma}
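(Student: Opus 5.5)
Fix an optimal test potential $f$ for $\Ceff(A,K)$, so $f=1$ on $A$, $f=0$ on $K$, and $\mathcal E(f)=\Ceff(A,K)$. We compare $f$ with the voltage $g$ along each sampled path. On a path, the increments of $f$ across the edges $e_\tau,\dots,e_{\zeta-1}$ telescope:
\[
1=f(x_\tau)-f(x_\zeta)=\sum_{i=\tau}^{\zeta-1}\bigl(f(x_i)-f(x_{i+1})\bigr).
\]
Dividing by $g(x_\tau)$ gives an identity for the random variable whose expectation we must bound.

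To make the $\theta$-weights appear, we use Cauchy--Schwarz with the voltage drops $\delta_i=g(x_i)-g(x_{i+1})>0$. Since $g$ is strictly decreasing along the path and $\sum_{i\ge\tau}\delta_i\le g(x_\tau)$,
\[
1\le\Bigl(\sum_{i=\tau}^{\zeta-1}\frac{(f(x_i)-f(x_{i+1}))^2}{\delta_i}\Bigr)\Bigl(\sum_{i=\tau}^{\zeta-1}\delta_i\Bigr)
\le g(x_\tau)\sum_{i=\tau}^{\zeta-1}\frac{(df)_{e_i}^2}{\delta_i}.
\]
Hence
\[
\frac1{g(x_\tau)}\le\sum_{i}\frac{(df)_{e_i}^2}{\delta_i}\le\sum_{e\in\gamma}\frac{(df)_e^2}{g(e^-)-g(e^+)},
\]
where the last sum runs over all edges of the path; enlarging the segment to the whole path is allowed because every term is nonnegative.

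Taking expectations and using \eqref{eq:path-decomposition} gives
\[
\E\frac{1}{g(x_\tau)}\le\sum_e\theta_e\frac{(df)_e^2}{g(e^-)-g(e^+)}=\sum_e c_e(df)_e^2=\mathcal E(f)=\Ceff(A,K).
\]
The sum ranges over edges with nonzero voltage drop. Edges with zero drop carry no flow and are never used by paths, and they contribute nonnegatively to $\mathcal E(f)$ anyway.

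The key step is the Cauchy--Schwarz weighting by $\delta_i$ together with the bound $\sum\delta_i\le g(x_\tau)$. Two facts have to be checked here. First, $g\ge0$ at the terminal vertex, which holds because $g$ is nonnegative. Second, each path uses each directed edge at most once, so that summing over the edges of a path and then taking expectations reproduces exactly $\theta_e$. This follows from acyclicity and strict monotonicity of $g$ along the path. No other obstacle arises; measurability is trivial since $\Prob$ is atomic and finite.
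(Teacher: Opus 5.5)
Your argument is correct, but it works on the opposite side of the variational duality from the paper's proof.

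The paper argues on the flow side. It superposes the selected segments, weighted by a path multiplier $\alpha(\gamma)$, to obtain a flow $\Phi_\alpha$ from $A$ to $K$ of strength $\E\alpha$, and applies Thomson's principle. It then bounds $\cE(\Phi_\alpha)\le\E[\alpha^2 g(x_\tau)]$ by the edgewise Cauchy inequality $\Phi_\alpha(e)^2\le\theta_e\int_{S_e}\alpha^2\,\dd\Prob$. Only at the end does it optimize, taking $\alpha=1/g(x_\tau)$.

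You argue on the potential side. You test an admissible potential $f$ against each sampled path, which uses Dirichlet's principle and hence literally the paper's definition of $\Ceff(A,K)$. A pathwise Cauchy inequality weighted by the voltage drops $\delta_i$ does the work, and you then average using $\Prob(\gamma\text{ uses }e)=\theta_e$ and $\theta_e/\delta_e=c_e$.

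Your route is shorter and more elementary. It needs no auxiliary flow, no appeal to Thomson's principle, and no optimization over multipliers, since the optimal weight $1/g(x_\tau)$ comes out of the pathwise inequality automatically. It also works with any admissible $f$, so existence of a minimizer is not needed.

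The paper's route, in exchange, makes the stopped-flow interpretation explicit and yields the whole family $(\E\alpha)^2\le\Ceff(A,K)\,\E[\alpha^2g(x_\tau)]$. That family also follows from your bound by one more Cauchy inequality. Both arguments in fact give the slightly stronger estimate with $g(x_\tau)-g(x_\zeta)$ in place of $g(x_\tau)$.
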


\begin{proof}
Let $\alpha=\alpha(\gamma)\ge0$ be an arbitrary path multiplier.  For a
directed edge $e$, set
\[
        S_e:=\{\gamma:e\text{ belongs to the selected segment }
        \gamma[\tau,\zeta]\}
\]
and define
\[
        \Phi_\alpha(e):=\int_{S_e}\alpha(\gamma)\,\dd\Prob(\gamma).
\]
Thus $\Phi_\alpha$ is the superposition of the selected unit path flows, with
the flow carried by $\gamma$ multiplied by $\alpha(\gamma)$.  It is a flow
from $A$ to $K$ of strength
\[
        I_\alpha:=\E\alpha.
\]
Thomson's principle therefore gives
\begin{equation*}
        \frac{I_\alpha^2}{\Ceff(A,K)}\le\cE(\Phi_\alpha).
\end{equation*}

If $e$ has voltage drop $\delta_e=g(e^-)-g(e^+)$, then
$\theta_e=c_e\delta_e$.  Moreover,
$S_e\subset\{\gamma:\gamma\text{ uses }e\}$, and hence
$\Prob(S_e)\le\theta_e$ by the path-decomposition identity.  Cauchy's
inequality now yields
\[
        \Phi_\alpha(e)^2
        \le
        \theta_e\int_{S_e}\alpha(\gamma)^2\,\dd\Prob(\gamma).
\]
Consequently, Tonelli's theorem and telescoping of the voltage drops along each
selected segment give
\begin{align*}
        \cE(\Phi_\alpha)
        &=\sum_e\frac{\Phi_\alpha(e)^2}{c_e}\\
        &\le
        \E\left[
        \alpha(\gamma)^2\bigl(g(x_\tau)-g(x_\zeta)\bigr)
        \right]\\
        &\le
        \E\left[\alpha(\gamma)^2g(x_\tau)\right],
\end{align*}
since $g\ge0$.  We have thus proved, for every nonzero nonnegative $\alpha$, that
\begin{equation*}
        \frac{(\E\alpha)^2}
        {\E\left[\alpha^2g(x_\tau)\right]}
        \le \Ceff(A,K).
\end{equation*}

Set $X(\gamma):=g(x_\tau)$.  Since the selected segment follows a strictly
voltage-decreasing path and $\tau<\zeta$, one has $X>0$.  By Cauchy's
inequality,
\[
        (\E\alpha)^2
        =\left(\E\left[\alpha X^{1/2}X^{-1/2}\right]\right)^2
        \le \E(\alpha^2X)\,\E(X^{-1}),
\]
and equality holds when $\alpha$ is proportional to $X^{-1}$.  Thus the
natural optimizing choice is
\[
        \alpha(\gamma)=\frac1{g(x_\tau)}.
\]
Substituting this choice into the preceding flow inequality gives
\[
        \frac{\left(\E X^{-1}\right)^2}{\Ceff(A,K)}
        \le \E X^{-1},
\]
which proves the claim.
\end{proof}

For a nonempty set $A\subset D$, define its $\lambda$-capacity relative to domain $D$ by
\begin{equation}\label{eq:finite-lambda-capacity-definition}
\Cap_{\lambda,D}^\nu(A)
:=
\inf\left\{
\mathcal E_{\mathrm D}(f)+\lambda\sum_{x\in D}f(x)^2\nu(x):
 f\in\ell_0(V),\quad
 f=1\text{ on }A,\quad f=0\text{ on }V\setminus D
\right\}.
\end{equation}
After putting every terminal at voltage zero, the augmented-network energy is
exactly the functional in \eqref{eq:finite-lambda-capacity-definition}.
Dirichlet's principle therefore gives the identification
\begin{equation}\label{eq:finite-lambda-capacity-identification}
\Cap_{\lambda,D}^\nu(A)
=
\Ceff^{D,\lambda}(A,\mathcal T_D).
\end{equation}

For $0<r<R$, apply Lemma~\ref{lem:reciprocal-stopped-flow} with
$A=B_\rho(o,r)$, $K=\mathcal T_D$, $\tau=\kappa_r$, and the terminal time as the
endpoint of the selected segment.  By
\eqref{eq:finite-lambda-capacity-identification},
\begin{equation*}
        \E Z_r^{-1}
        \le \Cap_{\lambda,D}^\nu\bigl(B_\rho(o,r)\bigr).
\end{equation*}
H\"older's inequality gives
\[
        1
        \le
        \bigl(\E Z_r^{q-1}\bigr)^{1/q}
        \bigl(\E Z_r^{-1}\bigr)^{(q-1)/q}.
\]
Therefore
\begin{equation}\label{eq:resolvent-capacity-positive-moment}
        \E Z_r^{q-1}
        \ge
        \Cap_{\lambda,D}^\nu\bigl(B_\rho(o,r)\bigr)^{1-q}.
\end{equation}

The preceding estimates give the finite-domain capacitary inequality directly.

\begin{proposition}\label{prop:finite-resolvent-capacity-lower}
Let $q>1$, $\lambda>0$, and suppose that $D\supset B_\rho(o,R)$ with
$0<R<\lambda^{-1/2}$.  Then
\begin{equation}\label{eq:finite-resolvent-capacity-lower}
L_{\lambda,D}^\nu(o)
:=\sum_{x\in D}G_{\lambda,D}^\sigma(o,x)^q\nu(x)
\ge
c_q\int_0^R
r\,\Cap_{\lambda,D}^\nu\bigl(B_\rho(o,r)\bigr)^{1-q}\,\dd r.
\end{equation}
\end{proposition}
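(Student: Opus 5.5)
The proof will simply chain the estimates already established for the finite augmented network. Fix $\lambda>0$ and $D\supset B_\rho(o,R)$ with $0<R<\lambda^{-1/2}$. Let $g=G_{\lambda,D}^\sigma(o,\cdot)$, and let $\Prob$ be the path decomposition of the unit current $\theta$ from $o$ to $\mathcal T_D$. We start from Lemma~\ref{lem:path-energy-representation}, which gives
\[
2L_{\lambda,D}^\nu(o)\ge \E H_\gamma .
\]
Next we bound $H_\gamma$ pathwise from below. The hypothesis $R<\lambda^{-1/2}$ together with $D\supset B_\rho(o,R)$ ensures that every terminal has radial label exceeding $R$, which is \eqref{eq:path-length-resolvent-scale}. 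Hence $\kappa_r$ and $Z_r(\gamma)$ are well defined for all $0<r<R$, and Lemma~\ref{lem:radial-first-exit-hardy} applies:
\[
H_\gamma\ge c_q\int_0^R r\,Z_r(\gamma)^{q-1}\,\dd r .
\]

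Taking expectations and exchanging $\E$ with the $r$-integral by Tonelli (all quantities are nonnegative) gives
\[
L_{\lambda,D}^\nu(o)\ge \tfrac{c_q}{2}\int_0^R r\,\E Z_r^{q-1}\,\dd r .
\]
For this exchange to be legitimate, $(r,\gamma)\mapsto Z_r(\gamma)$ must be measurable. Since $\Prob$ is a finite atomic measure, this reduces to $r\mapsto Z_r(\gamma)$ being piecewise constant for each fixed path: it only changes at the finitely many values $\mathfrak r(x_i)$, because $\kappa_r$ is determined by which radial labels exceed $r$.

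For each $r$ we then insert \eqref{eq:resolvent-capacity-positive-moment}, namely $\E Z_r^{q-1}\ge \Cap_{\lambda,D}^\nu(B_\rho(o,r))^{1-q}$. That bound came from Lemma~\ref{lem:reciprocal-stopped-flow} with $A=B_\rho(o,r)$ and $K=\mathcal T_D$, combined with the identification \eqref{eq:finite-lambda-capacity-identification} and Hölder's inequality. Substituting it yields \eqref{eq:finite-resolvent-capacity-lower} with a new constant $c_q$.

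The main point to check is the applicability of the stopped-flow lemma at every $r\in(0,R)$. The selected segment must begin at the index $\kappa_r$ with $x_{\kappa_r}\in A$ and end strictly later in $K$. By \eqref{eq:first-exit-location}, $\mathfrak r(x_{\kappa_r})\le r<R$. For a vertex of $D$ this means $x_{\kappa_r}\in B_\rho(o,r)$. A terminal cannot occur at index $\kappa_r<m$, since terminals are only path endpoints. Thus $\tau=\kappa_r<m=\zeta$, $x_\tau\in A$, and $g(x_\tau)>0$ by the strong maximum principle. It also follows that $Z_r>0$, so the negative moment $\E Z_r^{-1}$ in the Hölder step is finite and the resulting inequality is meaningful.
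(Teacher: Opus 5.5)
Your proposal is correct and follows the same route as the paper: Lemma~\ref{lem:path-energy-representation}, then the pathwise bound of Lemma~\ref{lem:radial-first-exit-hardy}, then Tonelli, and finally the moment bound \eqref{eq:resolvent-capacity-positive-moment} obtained from Lemma~\ref{lem:reciprocal-stopped-flow} together with H\"older's inequality. Your extra checks are valid and are details the paper leaves implicit: that $r\mapsto Z_r(\gamma)$ is piecewise constant, and that $x_{\kappa_r}$ is a non-terminal vertex of $B_\rho(o,r)$, so the stopped-flow lemma applies with $A=B_\rho(o,r)$ and $K=\mathcal T_D$.
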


\begin{proof}
Lemmas~\ref{lem:path-energy-representation} and~\ref{lem:radial-first-exit-hardy}, followed by Tonelli's theorem, give
\begin{align*}
L_{\lambda,D}^\nu(o)
&\ge c_q\E H_\gamma\\
&\ge c_q\int_0^R r\,\E Z_r^{q-1}\,\dd r.
\end{align*}
Substituting \eqref{eq:resolvent-capacity-positive-moment} proves
\eqref{eq:finite-resolvent-capacity-lower}.
\end{proof}

We next pass from the killed capacities to the whole-graph capacity
\eqref{eq:lambda-capacity-definition}.

\begin{lemma}\label{lem:lambda-capacity-exhaustion}
Let $A\subset V$ be finite and nonempty, let $\lambda>0$, and let
$D_j\uparrow V$ be an exhaustion by finite connected sets containing $A$.
Then
\begin{equation*}
\Cap_{\lambda,D_j}^\nu(A)\downarrow\Cap_\lambda^\nu(A).
\end{equation*}
\end{lemma}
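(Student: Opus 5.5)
The argument has two parts: monotonicity of the killed capacities in $j$, and identification of the limit as the infimum over all finitely supported test functions.

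\emph{Monotonicity.} For each $j$ the admissible class in \eqref{eq:finite-lambda-capacity-definition} consists of $f\in\ell_0(V)$ with $f=1$ on $A$ and $f=0$ on $V\setminus D_j$. Since $D_j\subset D_{j+1}$, this class increases with $j$. The functional is $\mathcal E_{\mathrm D}(f)+\lambda\sum_{x\in D_j}f(x)^2\nu(x)$, and because $f$ vanishes off $D_j$, the mass term coincides with the full sum $\lambda\sum_{x\in V}f(x)^2\nu(x)$. So the functional is the same for every $j$. Hence $\Cap_{\lambda,D_j}^\nu(A)$ is nonincreasing in $j$.

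\emph{Lower bound.} Every admissible $f$ for $D_j$ is admissible in \eqref{eq:lambda-capacity-definition}, since it is finitely supported and $f\ge1$ on $A$. Therefore $\Cap_{\lambda,D_j}^\nu(A)\ge\Cap_\lambda^\nu(A)$ for every $j$, and the limit exists and is at least $\Cap_\lambda^\nu(A)$.

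\emph{Reverse inequality.} Fix $\varepsilon>0$. Using the truncation remark after \eqref{eq:lambda-capacity-definition}, pick $f\in\ell_0(V)$ with $f=1$ on $A$, $0\le f\le1$, and
\[
\mathcal E_{\mathrm D}(f)+\lambda\sum_x f^2\nu\le\Cap_\lambda^\nu(A)+\varepsilon .
\]
If one prefers not to rely on that remark, replace $f$ by $\min\{f,1\}$. Truncation is a normal contraction, so it does not increase $\mathcal E_{\mathrm D}$. It does not increase the mass term either, because $f\ge0$ can be assumed after first replacing $f$ by $f_+$.

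Since $\operatorname{supp} f$ is finite and $D_j\uparrow V$, there is a $j_0$ with $\operatorname{supp} f\subset D_{j_0}$. Then $f$ is admissible for every $D_j$ with $j\ge j_0$, so $\Cap_{\lambda,D_j}^\nu(A)\le\Cap_\lambda^\nu(A)+\varepsilon$ for those $j$. Letting $\varepsilon\to0$ gives the claimed convergence.

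No compactness or exhaustion limit of minimizers is needed. The only point requiring care is the truncation step that upgrades the condition $f\ge1$ on $A$ to $f=1$ on $A$. There the inequality $|\min\{a,1\}-\min\{b,1\}|\le|a-b|$ controls the energy, and $\min\{f,1\}^2\le f^2$ for $f\ge0$ controls the mass.
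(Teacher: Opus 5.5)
Your proof is correct and follows the paper's argument essentially verbatim: monotonicity comes from the nested admissible classes, the lower bound from admissibility in the whole-graph problem, and the reverse inequality from a truncated, finitely supported near-minimizer whose support eventually lies in $D_j$. Your extra remarks (that the mass term over $D_j$ equals the full sum, and the explicit $f_+$-then-$\min\{\cdot,1\}$ truncation) only spell out details the paper leaves implicit.
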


\begin{proof}
If $D_j\subset D_{j+1}$, every function admissible for
$\Cap_{\lambda,D_j}^\nu(A)$ is admissible for
$\Cap_{\lambda,D_{j+1}}^\nu(A)$, so the finite-domain capacities decrease.  Every
such function is finitely supported and admissible for $\Cap_\lambda^\nu(A)$;
hence their limit is at least $\Cap_\lambda^\nu(A)$.

Conversely, let $\varepsilon>0$.  Choose $f\in\ell_0(V)$, with $f\ge1$ on
$A$, such that
\[
\mathcal E_{\mathrm D}(f)+\lambda\sum_x f(x)^2\nu(x)
\le \Cap_\lambda^\nu(A)+\varepsilon.
\]
After truncation, assume $f=1$ on $A$ and $0\le f\le1$.  For all sufficiently
large $j$, $\operatorname{supp}f\subset D_j$, and hence
\[
\Cap_{\lambda,D_j}^\nu(A)
\le \mathcal E_{\mathrm D}(f)+\lambda\sum_x f(x)^2\nu(x)
\le \Cap_\lambda^\nu(A)+\varepsilon.
\]
Letting $\varepsilon\downarrow0$ proves the claim.
\end{proof}

\begin{proposition}\label{prop:resolvent-capacity-lower}
Let $q>1$ and fix $\eta\in(0,1)$.  Then, for every
$\lambda>0$,
\begin{equation}\label{eq:resolvent-capacity-lower}
L_\lambda^\nu(o)
\ge
c_q\int_0^{\eta\lambda^{-1/2}}
r\,\Cap_\lambda^\nu\bigl(B_\rho(o,r)\bigr)^{1-q}\,\dd r.
\end{equation}
\end{proposition}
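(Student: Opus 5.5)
The plan is to pass to the limit in Proposition~\ref{prop:finite-resolvent-capacity-lower} along a finite exhaustion. Fix $\lambda>0$ and $\eta\in(0,1)$, and set $R:=\eta\lambda^{-1/2}<\lambda^{-1/2}$. Choose an increasing exhaustion $D_j\uparrow V$ by finite connected sets. Since $d_\rho$ is proper, $B_\rho(o,R)$ is finite, so we may assume every $D_j$ contains $B_\rho(o,R)$ and $o$.

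Proposition~\ref{prop:finite-resolvent-capacity-lower} then applies to each $D_j$ and gives
\[
L_{\lambda,D_j}^\nu(o)\ge c_q\int_0^R r\,\Cap_{\lambda,D_j}^\nu\bigl(B_\rho(o,r)\bigr)^{1-q}\,\dd r .
\]

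The left-hand side is handled by monotone convergence. The killed kernels $p_{\sigma,D_j}(t,o,x)$, extended by zero, increase pointwise to $p_\sigma(t,o,x)$. Monotone convergence in $t$ then gives $G_{\lambda,D_j}^\sigma(o,x)\uparrow G_\lambda^\sigma(o,x)$ for every $x$. Applying monotone convergence once more to the sum over $x$, with weights $\nu(x)$, gives $L_{\lambda,D_j}^\nu(o)\uparrow L_\lambda^\nu(o)$, where the limit is allowed to be $+\infty$.

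For the right-hand side, each ball $B_\rho(o,r)$ with $0<r\le R$ is finite. It is also nonempty, since it contains $o$, and it lies inside every $D_j$. Lemma~\ref{lem:lambda-capacity-exhaustion} then gives $\Cap_{\lambda,D_j}^\nu(B_\rho(o,r))\downarrow\Cap_\lambda^\nu(B_\rho(o,r))$. Because $1-q<0$, the integrands $r\,\Cap_{\lambda,D_j}^\nu(B_\rho(o,r))^{1-q}$ increase in $j$, and monotone convergence in $r$ passes the limit under the integral.

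The only point needing care is that the capacities be positive, so that the negative powers make sense. This holds because the functional in \eqref{eq:lambda-capacity-definition} is bounded below by $\lambda\nu(o)>0$, since any admissible $f$ satisfies $f(o)\ge1$. Measurability in $r$ is also harmless: $r\mapsto B_\rho(o,r)$ is monotone, so the capacities are monotone in $r$ and hence measurable.

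Combining the two limits proves \eqref{eq:resolvent-capacity-lower}, with the constant $c_q$ of Proposition~\ref{prop:finite-resolvent-capacity-lower}. I expect no real obstacle here beyond checking the direction of monotonicity under the exponent $1-q$.
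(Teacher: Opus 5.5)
Your proof is correct and follows the paper's argument essentially verbatim: you apply Proposition~\ref{prop:finite-resolvent-capacity-lower} on an exhaustion containing $B_\rho(o,\eta\lambda^{-1/2})$, use monotone convergence of the killed resolvents on the left, and use Lemma~\ref{lem:lambda-capacity-exhaustion} together with $1-q<0$ on the right. Your extra checks are valid details that the paper leaves implicit: positivity of the capacities via the lower bound $\lambda\nu(o)$, and measurability in $r$ by monotonicity.
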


\begin{proof}
Put $R=\eta\lambda^{-1/2}$ and choose an exhaustion $D_j\uparrow V$ with
$D_j\supset B_\rho(o,R)$.  Proposition~\ref{prop:finite-resolvent-capacity-lower}
gives
\[
L_{\lambda,D_j}^\nu(o)
\ge
c_q\int_0^R
r\,\Cap_{\lambda,D_j}^\nu\bigl(B_\rho(o,r)\bigr)^{1-q}\,\dd r.
\]
The Dirichlet resolvents increase pointwise to $G_\lambda^\sigma(o,\cdot)$, so
$L_{\lambda,D_j}^\nu(o)\uparrow L_\lambda^\nu(o)$.  By
Lemma~\ref{lem:lambda-capacity-exhaustion} and $1-q<0$, the integrands on the
right increase pointwise to
$r\Cap_\lambda^\nu(B_\rho(o,r))^{1-q}$.  Monotone convergence proves
\eqref{eq:resolvent-capacity-lower}.
\end{proof}

\begin{remark}
Proposition~\ref{prop:resolvent-capacity-lower} contains the elliptic
Green-energy estimate as the limit $\lambda\downarrow0$.  For a nonempty
finite set $A\subset V$, define its capacity to infinity by
\[
\Cap_V(A):=
\inf\left\{\mathcal E_{\mathrm D}(f):
 f\in\ell_0(V),\quad f\ge1\text{ on }A\right\}.
\]
Letting $\lambda\downarrow0$ first in the finite-domain resolvent equation
\eqref{eq:finite-resolvent-equation} and then along an exhaustion shows that
\[
G_\lambda^\sigma(o,x)\uparrow
 g(o,x):=\int_0^\infty p_\sigma(t,o,x)\,\dd t\in(0,\infty],
\]
where $g$ is the extended whole-graph Green kernel of $-\Delta$ and is
independent of the speed weight $\sigma$.  When finite, it satisfies
$-\Delta g(o,\cdot)=\mu(o)^{-1}\1_{\{o\}}$.  The variational definitions also
give
\[
        \Cap_\lambda^\nu(A)\downarrow\Cap_V(A).
\]
Indeed, the lower bound by $\Cap_V(A)$ is immediate, whereas a finitely
supported near-minimizer for $\Cap_V(A)$ gives the reverse bound as
$\lambda\downarrow0$.
Since $\eta\lambda^{-1/2}\uparrow\infty$, monotone convergence in
\eqref{eq:resolvent-capacity-lower} yields, with the convention
$0^{1-q}=+\infty$,
\[
\sum_{x\in V}g(o,x)^q\nu(x)
\ge
c_q\int_0^\infty
r\,\Cap_V\bigl(B_\rho(o,r)\bigr)^{1-q}\,\dd r.
\]
This is exactly the elliptic capacity estimate
\cite[Proposition~6.13]{GHHS26}.
\end{remark}

The $\lambda$-capacity can be estimated more sharply by retaining separately
the annular energy and the $L^2$ mass.

\begin{lemma}\label{lem:massive-annular-capacity-comparison}
For every $\lambda>0$ and $0<r<R$,
\begin{equation*}
\Cap_\lambda^\nu\bigl(B_\rho(o,r)\bigr)
\le
\cC_{o,\rho}(r,R)+\lambda M_o(R).
\end{equation*}
In particular,
\begin{equation}\label{eq:annular-capacity-cutoff}
        \cC_{o,\rho}(r)=\cC_{o,\rho}(r,2r)\le r^{-2}M_o(2r).
\end{equation}
More generally, if $R\ge2r$, then
\begin{equation*}
        \cC_{o,\rho}(r,R)\le r^{-2}M_o(R).
\end{equation*}
\end{lemma}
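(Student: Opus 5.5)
The first inequality follows by testing $\Cap_\lambda^\nu(B_\rho(o,r))$ with a near-minimizer of the annular capacity. Let $f\in\ell_0(V)$ be admissible for $\cC_{o,\rho}(r,R)$, so that $f=1$ on $B_\rho(o,r)$ and $f=0$ off $B_\rho(o,R)$. By the truncation remark after \eqref{eq:annular-capacity-definition} (truncation does not increase $\mathcal E_{\mathrm D}$), we may assume $0\le f\le1$. Then $f$ is admissible for \eqref{eq:lambda-capacity-definition} with $A=B_\rho(o,r)$. Since $f^2\le\1_{B_\rho(o,R)}$, the mass term satisfies
\[
\lambda\sum_x f(x)^2\nu(x)\le\lambda M_o(R).
\]
Taking the infimum over $f$ gives $\Cap_\lambda^\nu(B_\rho(o,r))\le\cC_{o,\rho}(r,R)+\lambda M_o(R)$. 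Properness of $d_\rho$ makes $B_\rho(o,R)$ finite, so such $f$ exist and are finitely supported.

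For the cutoff bound with $R\ge2r$, we use the intrinsic radial cutoff
\[
f(x):=\min\bigl\{1,\ (2r-d_\rho(o,x))_+/r\bigr\}.
\]
This function equals $1$ on $B_\rho(o,r)$, vanishes outside $B_\rho(o,2r)$ (hence outside $B_\rho(o,R)$), is finitely supported by properness, and is $r^{-1}$-Lipschitz with respect to $d_\rho$. For $x\sim y$ we therefore have $|f(x)-f(y)|\le r^{-1}\rho(x,y)$. Moreover, $f(x)-f(y)\ne0$ forces at least one of $x,y$ to lie in $B_\rho(o,2r)$, since both values vanish outside it.

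Bounding each unordered edge by contributions from its endpoints in $B_\rho(o,2r)$ gives
\[
\mathcal E_{\mathrm D}(f)=\frac12\sum_x\sum_{y\sim x}\mu_{xy}(f(x)-f(y))^2
\le r^{-2}\sum_{x\in B_\rho(o,2r)}\sum_{y\sim x}\mu_{xy}\rho(x,y)^2 .
\]
The factor $\tfrac12$ is absorbed: every unordered edge with a nonzero increment is counted at least once and at most twice in the right-hand double sum, while it appears exactly twice in the left-hand one. By adaptedness \eqref{eq:adapted-weight}, the inner sum is at most $\nu(x)$. Hence $\mathcal E_{\mathrm D}(f)\le r^{-2}M_o(2r)\le r^{-2}M_o(R)$. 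Taking $R=2r$ yields \eqref{eq:annular-capacity-cutoff}.

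The only point needing care is this edge bookkeeping, namely making sure that no constant factor larger than $1$ appears. Boundary edges with one endpoint outside $B_\rho(o,2r)$ are counted from their inner endpoint, which is in the ball, so nothing is lost.
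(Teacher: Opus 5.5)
Your proof is correct and is essentially the paper's argument. The paper tests $\Cap_\lambda^\nu(B_\rho(o,r))$ with the truncated equilibrium potential $h_{r,R}$ rather than taking an infimum over all admissible functions, which makes no difference. It then uses the same intrinsic radial cutoff with the $r^{-1}$-Lipschitz bound and adaptedness \eqref{eq:adapted-weight}, leaving implicit the edge bookkeeping you spell out (your cited truncation remark actually follows \eqref{eq:lambda-capacity-definition}, but your parenthetical justification suffices).
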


\begin{proof}
Since $B_\rho(o,R)$ is finite, the variational problem
\eqref{eq:annular-capacity-definition} has an equilibrium potential
$h_{r,R}$.  Truncation gives
$0\le h_{r,R}\le1$, while $h_{r,R}=1$ on $B_\rho(o,r)$ and
$h_{r,R}=0$ outside $B_\rho(o,R)$.  Hence $h_{r,R}$ is admissible for the $\lambda$-capacity and
\[
\Cap_\lambda^\nu\bigl(B_\rho(o,r)\bigr)
\le
\mathcal E_{\mathrm D}(h_{r,R})
+\lambda\sum_xh_{r,R}(x)^2\nu(x)
\le
\cC_{o,\rho}(r,R)+\lambda M_o(R).
\]
For $R\ge2r$, define the radial cutoff
\[
\eta_r(x)=
\begin{cases}
1,&d_\rho(o,x)\le r,\\
(2r-d_\rho(o,x))/r,&r<d_\rho(o,x)<2r,\\
0,&d_\rho(o,x)\ge2r.
\end{cases}
\]
It is admissible for $\cC_{o,\rho}(r,R)$.  Since the scalar cutoff is
$r^{-1}$-Lipschitz and
$|d_\rho(o,x)-d_\rho(o,y)|\le\rho(x,y)$ for $x\sim y$,
\eqref{eq:adapted-weight} gives
$\mathcal E_{\mathrm D}(\eta_r)\le r^{-2}M_o(2r)\le r^{-2}M_o(R)$.
This proves both cutoff estimates.
\end{proof}

\section{Proofs of the main results}

\subsection{Proof of the parabolic capacity estimate}

\begin{proof}[Proof of Theorem~\ref{thm:heat-capacity-lower}]
Fix the absolute constant $\eta:=1/2$ and apply
Proposition~\ref{prop:resolvent-capacity-lower}.  Corollary~\ref{cor:resolvent-reduction},
that proposition, and Tonelli's theorem give
\begin{align*}
\cH_{q,\sigma}(o)
&\ge
c_q\int_0^{\eta^2}\lambda^{q-2}L_\lambda^\nu(o)\,\dd\lambda\\
&\ge
c_q\int_0^{\eta^2}\lambda^{q-2}
\int_0^{\eta\lambda^{-1/2}}
 r\,\Cap_\lambda^\nu\bigl(B_\rho(o,r)\bigr)^{1-q}
 \,\dd r\,\dd\lambda.
\end{align*}
This proves \eqref{eq:heat-resolvent-capacity-lower}.

Interchanging the integrals and retaining only $r\ge1$ gives
\begin{equation*}
\cH_{q,\sigma}(o)
\ge
c_q\int_1^\infty r
\int_0^{\eta^2/r^2}
\lambda^{q-2}\Cap_\lambda^\nu\bigl(B_\rho(o,r)\bigr)^{1-q}
\,\dd\lambda\,\dd r.
\end{equation*}
Set
\[
        C:=\cC_{o,\rho}(r),
        \qquad
        M:=M_o(2r).
\]
For $s\ge0$, set
\[
        \Phi_q(s):=\int_0^s t^{q-2}(1+t)^{1-q}\,\dd t.
\]
Since $B_\rho(o,2r)$ is finite and the graph is connected, one has $C>0$.
Lemma~\ref{lem:massive-annular-capacity-comparison} and $1-q<0$ imply
\[
\Cap_\lambda^\nu\bigl(B_\rho(o,r)\bigr)^{1-q}
\ge
(C+\lambda M)^{1-q}.
\]
With $t=\lambda M/C$,
\begin{align*}
\int_0^{\eta^2/r^2}
\lambda^{q-2}(C+\lambda M)^{1-q}\,\dd\lambda
&=
M^{1-q}
\int_0^{\eta^2M/(r^2C)}
 t^{q-2}(1+t)^{1-q}\,\dd t\\
&=
M^{1-q}\Phi_q\!\left(\eta^2\frac{M}{r^2C}\right).
\end{align*}
Consequently,
\begin{equation}\label{eq:heat-capacity-volume-phi-lower}
\cH_{q,\sigma}(o)
\ge
c_q\int_1^\infty
rM_o(2r)^{1-q}
\Phi_q\!\left(
\eta^2\frac{M_o(2r)}{r^2\cC_{o,\rho}(r)}
\right)\,\dd r.
\end{equation}

By \eqref{eq:annular-capacity-cutoff},
\[
        s:=\frac{M_o(2r)}{r^2\cC_{o,\rho}(r)}\ge1.
\]
For $t\ge1$,
\[
2^{1-q}\frac1t
\le
t^{q-2}(1+t)^{1-q}
\le
\frac1t.
\]
Together with positivity on compact subintervals of $(0,\infty)$, this gives
\begin{equation*}
        \Phi_q(\eta^2s)
        \ge c_q\log(1+s),
        \qquad s\ge1.
\end{equation*}
Substitution proves \eqref{eq:heat-capacity-volume-lower}.
\end{proof}

\subsection{Proof of the mixed capacity--volume criterion}

\begin{proof}[Proof of Theorem~\ref{thm:capacity-volume-criterion}]
Condition \eqref{eq:capacity-volume-condition} and
Theorem~\ref{thm:heat-capacity-lower} imply
\[
        \int_0^\infty\sum_xp_\sigma(t,o,x)^q\nu(x)\,\dd t=\infty.
\]
The conclusion follows from Corollary~\ref{cor:heat-energy-liouville}.
\end{proof}

\subsection{Proof of the heat-energy volume lower bound}

\begin{proof}[Proof of Theorem~\ref{thm:heat-energy-lower}]
Theorem~\ref{thm:heat-capacity-lower} and
\eqref{eq:annular-capacity-cutoff} give
\begin{align*}
\cH_{q,\sigma}(o)
&\ge
c_q\int_1^\infty
\frac{r}{M_o(2r)^{q-1}}
\log\!\left(1+\frac{M_o(2r)}{r^2\cC_{o,\rho}(r)}\right)\,\dd r\\
&\ge
c_q\int_1^\infty
\frac{r}{M_o(2r)^{q-1}}\,\dd r.
\end{align*}
The change of variables $s=2r$ proves
\eqref{eq:main-heat-lower}.
\end{proof}

\subsection{Proof of the parabolic volume criterion}

\begin{proof}[Proof of Theorem~\ref{thm:volume-criterion}]
Under \eqref{eq:volume-condition}, the integral on the right-hand side of
\eqref{eq:main-heat-lower} diverges.  Therefore
\[
        \int_0^\infty\sum_xp_\sigma(t,o,x)^q\nu(x)\,\dd t=\infty,
\]
and Corollary~\ref{cor:heat-energy-liouville} gives the conclusion.
\end{proof}

\begin{proof}[Proof of Corollary~\ref{cor:unweighted-volume-criterion}]
Take $\sigma\equiv1$ and $\rho\equiv1$.  Then $\nu=\mu$,
$\Delta_\sigma=\Delta$, and $d_\rho$ is the graph distance.  For every
integer $n\ge1$ and every $r\in[n,n+1)$,
\[
        B(o,r)=B(o,n).
\]
Consequently,
\[
\int_1^\infty\frac{r}{\mu(B(o,r))^{q-1}}\,\dd r
=
\sum_{n=1}^\infty
\frac{n+\tfrac12}{\mu(B(o,n))^{q-1}},
\]
which shows that
\[
\int_1^\infty\frac{r}{\mu(B(o,r))^{q-1}}\,\dd r=\infty
\quad\Longleftrightarrow\quad
\sum_{n=1}^\infty\frac{n}{\mu(B(o,n))^{q-1}}=\infty.
\]
The assertion follows from Theorem~\ref{thm:volume-criterion}.
\end{proof}

\begin{proof}[Proof of Corollary~\ref{cor:unweighted-equation-volume-criterion}]
A positive global classical solution on $(0,\infty)\times V$ is a nonnegative
global classical supersolution of the corresponding inequality, so the first
assertion follows from Corollary~\ref{cor:unweighted-volume-criterion}.  If a
global Cauchy solution as in the second assertion existed, its restriction to
$(0,\infty)\times V$ would likewise be a nonnegative global classical
supersolution.  The same corollary would force this restriction to vanish
identically, and pointwise continuity at $t=0$ would then give $u_0\equiv0$,
contrary to the hypothesis.
\end{proof}

\section{Converse and sharpness examples}
\label{sec:sharpness}

\subsection{A heat-kernel mechanism for existence}

\begin{proof}[Proof of Proposition~\ref{prop:heat-kernel-existence}]
Set
\[
        b(t):=\sup_{x\in V}p_\sigma(t,o,x),
        \qquad t>0.
\]
The semigroup property, symmetry, and sub-Markov property give, for $s,t>0$,
\begin{align*}
p_\sigma(t+s,o,x)
&=\sum_{y\in V}
p_\sigma(t,o,y)p_\sigma(s,y,x)\nu(y)\\
&\le b(t)\sum_{y\in V}p_\sigma(s,x,y)\nu(y)
\le b(t).
\end{align*}
Moreover, symmetry and sub-Markovianity imply, for every $x\in V$,
\[
        p_\sigma(t,o,x)\nu(o)
        =p_\sigma(t,x,o)\nu(o)
        \le
        \sum_{y\in V}p_\sigma(t,x,y)\nu(y)
        \le1.
\]
Hence $b(t)\le\nu(o)^{-1}$.  Thus $b$ is finite and nonincreasing.  Put
\[
        h(t,x):=p_\sigma(t+1,o,x),
        \qquad
        b_0(t):=b(t+1),
        \qquad t\ge0.
\]
By \eqref{eq:heat-kernel-existence-condition},
\[
        \int_0^\infty b_0(t)^{q-1}\,\dd t<\infty.
\]
Extend $b_0$ to $(-\infty,0)$ by the constant value $b_0(0)$.  Let
$\varphi\in C_c^\infty((0,1))$ be nonnegative with
$\int_0^1\varphi(s)\,\dd s=1$, and define
\[
        B(t):=\int_0^1\varphi(s)b_0(t-s)\,\dd s,
        \qquad t\ge0.
\]
Then $B$ is smooth and $B(t)\ge b_0(t)$ because $b_0$ is nonincreasing.
Moreover, $B(t)\le b_0(t-1)$ for $t\ge1$, and hence
\[
        J:=\int_0^\infty B(t)^{q-1}\,\dd t<\infty.
\]

Choose $\varepsilon>0$ so small that
\[
        (q-1)\varepsilon^{q-1}J<1,
\]
and set
\[
A_\varepsilon(t)
:=
\left[
\varepsilon^{1-q}
-(q-1)\int_0^tB(s)^{q-1}\,\dd s
\right]^{-1/(q-1)}.
\]
This is a positive bounded $C^1$ function on $[0,\infty)$ and
\[
        A_\varepsilon'(t)
        =A_\varepsilon(t)^qB(t)^{q-1}.
\]
Since $h$ is positive and solves the heat equation, the function
\[
        U_\varepsilon(t,x):=A_\varepsilon(t)h(t,x)
\]
is a positive global classical supersolution.  Indeed,
\begin{align*}
(\partial_t-\Delta_\sigma)U_\varepsilon(t,x)
&=A_\varepsilon(t)^qB(t)^{q-1}h(t,x)\\
&\ge A_\varepsilon(t)^qh(t,x)^q
=U_\varepsilon(t,x)^q.
\end{align*}
Fix, for example, the shift $s=1$ and set
\[
        a_0:=\frac12U_\varepsilon(1,o)>0.
\]
The quantitative part of
Proposition~\ref{prop:supersolution-solution-equivalence} now gives, for every
$a\in(0,a_0]$, a positive global classical solution of the Cauchy problem
\eqref{eq:cauchy-problem} with $u_0=a\1_{\{o\}}$.
\end{proof}

\begin{proof}[Proof of Corollary~\ref{cor:heat-kernel-volume-existence}]
By \eqref{eq:anchored-heat-kernel-upper} and the change of variables
$t=r^2$,
\begin{align*}
\int_1^\infty
\left(\sup_{x\in V}p_\sigma(t,o,x)\right)^{q-1}\,\dd t
&\le
C^{q-1}\int_1^\infty\frac{\dd t}{M_o(\sqrt t)^{q-1}}\\
&=
2C^{q-1}\int_1^\infty
\frac{r}{M_o(r)^{q-1}}\,\dd r.
\end{align*}
The right-hand side is finite by
\eqref{eq:volume-convergence-condition}, so
Proposition~\ref{prop:heat-kernel-existence} applies.
\end{proof}

\subsection{The lattice example}

Consider $\mathbb Z^d$, $d\ge1$, with nearest-neighbor edges of unit
conductance, $\sigma\equiv1$, and $\rho\equiv1$.  Then
\[
        M_0(r)\asymp r^d,
        \qquad
        \sup_{x\in\mathbb Z^d}p(t,0,x)\lesssim t^{-d/2},
        \qquad t\ge1.
\]
The heat-kernel estimate is standard; see, for example,
\cite{Delmotte,GT01}.  The corresponding semilinear
existence--nonexistence dichotomy on $\mathbb Z^d$ was also studied in
\cite{LinWu17}.  Consequently,
\[
\int_1^\infty\frac{r}{M_0(r)^{q-1}}\,\dd r
\begin{cases}
=\infty,&q\le1+\dfrac2d,\\[4pt]
<\infty,&q>1+\dfrac2d.
\end{cases}
\]
Theorem~\ref{thm:volume-criterion} excludes every nontrivial nonnegative
Cauchy datum when $q\le1+2/d$---indeed, it excludes even trace-free
supersolutions---whereas Corollary~\ref{cor:heat-kernel-volume-existence}
produces positive global Cauchy solutions for all sufficiently small
point-source data when $q>1+2/d$.  Thus the two results recover both sides of
the classical Fujita threshold on $\mathbb Z^d$.

\subsection{Logarithmic and iterated-logarithmic thresholds}

We next construct a family for which the polynomial growth exponent is
critical and lower-order volume factors determine the conclusion.  Fix
$D>1$ and $\beta,\gamma\in\mathbb R$.  Choose $n_0>e^e$ sufficiently large
that the sequence
\begin{equation}\label{eq:half-line-conductances}
        a_n
        :=
        (n+n_0)^{D-1}
        \bigl(\log(n+n_0)\bigr)^\beta
        \bigl(\log\log(n+n_0)\bigr)^\gamma,
        \qquad n\in\mathbb N_0,
\end{equation}
is increasing.  On $V=\mathbb N_0$, join $n$ to $n+1$ with conductance
$a_n$.  Thus
\[
        \mu(0)=a_0,
        \qquad
        \mu(n)=a_{n-1}+a_n,\quad n\ge1.
\]
Take $\sigma\equiv1$ and $\rho(n,n+1)=1$.

\begin{proposition}\label{prop:half-line-heat-kernel}
For the weighted half-line defined above, the unit edge length
$\rho\equiv1$ is $\mu$-adapted, its associated graph distance is proper,
\begin{equation}\label{eq:half-line-volume}
        M_0(r)
        \asymp
        r^D(\log r)^\beta(\log\log r)^\gamma,
        \qquad r\to\infty,
\end{equation}
and
\begin{equation}\label{eq:half-line-heat-kernel-upper}
        \sup_{n\in\mathbb N_0}p(t,0,n)
        \le\frac{C}{M_0(\sqrt t)},
        \qquad t\ge1.
\end{equation}
\end{proposition}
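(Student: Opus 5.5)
The first three assertions are elementary, and the heat-kernel bound \eqref{eq:half-line-heat-kernel-upper} is the real content.

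\emph{Adaptedness.} With $\sigma\equiv1$ we have $\nu=\mu$. The adaptedness condition \eqref{eq:adapted-weight} for $\rho\equiv1$ reads $\sum_{y\sim n}\mu_{ny}\le\mu(n)$, which holds with equality.

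\emph{Properness.} The associated metric is $d(0,n)=n$, so $B(0,r)=\{0,\dots,\lfloor r\rfloor\}$ is finite.

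\emph{Volume.} $M_0(r)=\sum_{n\le r}\mu(n)\asymp\sum_{n\le r}a_n$. Since $a_n$ is regularly varying of index $D-1>0$ and the logarithmic factors are slowly varying, a Karamata-type comparison of the sum with $\int^r s^{D-1}(\log s)^\beta(\log\log s)^\gamma\dd s$ gives $\sum_{n\le r}a_n\asymp r\,a_r\asymp r^D(\log r)^\beta(\log\log r)^\gamma$. Here the shift by $n_0$ changes nothing asymptotically, and the comparison needs only $a_{2n}\asymp a_n$. This proves \eqref{eq:half-line-volume}. I would also record the two consequences used below: $M_0$ is doubling, $M_0(2r)\le CM_0(r)$, and it satisfies the reverse-doubling bound $M_0(R)/M_0(r)\ge c(R/r)^{D'}$ for some $D'>1$ and $R\ge r\ge1$.

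\emph{Heat-kernel bound.} I would derive \eqref{eq:half-line-heat-kernel-upper} from a Nash or Faber--Krahn inequality adapted to the anchored geometry.

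\begin{enumerate}[label=(\arabic*)]
\item \textbf{Diagonal reduction.} By the semigroup property and Cauchy--Schwarz, $p(t,0,n)\le p(t,0,0)^{1/2}p(t,n,n)^{1/2}$. A bound at $0$ alone is therefore insufficient, and I would aim for a uniform bound on $\sup_x p(t,x,x)$. Since the model grows away from $0$, balls around far points are larger, which suggests that $\sup_{x,y}p(t,x,y)\lesssim 1/\inf_x\mu(B(x,\sqrt t))$. Here $\mu(B(x,r))\ge\mu(B(0,r))$ up to constants because $a_n$ is increasing: a ball of radius $r$ about $x\ge r$ has volume about $r\,a_x\ge r\,a_r$, while for $x<r$ it contains $B(0,r-x)$ and is comparable to $M_0(r)$ by doubling.

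\item \textbf{Faber--Krahn inequality.} I would prove $\lambda_1(\Omega)\ge c\,r(\Omega)^{-2}$, where $r(\Omega)$ is defined by $\mu(\Omega)=M_0(r(\Omega))$, for every finite $\Omega\subset\mathbb N_0$.
 \begin{itemize}
 \item On the half-line a Hardy/Muckenhoupt argument is available. A function $f$ vanishing outside $\Omega$ vanishes at some point beyond $\Omega$, so $f(n)=\sum_{k\ge n}(f(k)-f(k+1))$.
 \item The weighted Hardy inequality then gives $\sum f^2\mu\le C\,\mathcal B\,\mathcal E_{\mathrm D}(f)$. The Muckenhoupt constant is $\mathcal B=\sup_{n}\mu(\Omega\cap[0,n])\sum_{k\ge n,\,k\le\max\Omega}a_k^{-1}$.
 \item Monotonicity of $a_k$ should bound $\mathcal B$ by $C\,r(\Omega)^2$: the worst case is $\Omega$ an initial segment, where $\mathcal B\asymp M_0(r)\cdot r/a_r\asymp r^2$.
 \item For general $\Omega$ one uses that $\mu(\Omega\cap[0,n])\le\min(\mu(\Omega),M_0(n))$ and splits according to whether $n\le r(\Omega)$.
 \end{itemize}
 I expect this Hardy estimate to be the main obstacle. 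It must be checked uniformly over all finite sets, including sets far from $0$. There, $\sum_{k\ge n}^{\max\Omega}a_k^{-1}\le |\Omega\text{-span}|/a_n$, and this must be compared with $r(\Omega)^2$. The span can exceed the cardinality, however, so I would instead apply the Hardy inequality to the decreasing rearrangement of $\Omega$ toward the origin, or argue componentwise on the intervals of $\Omega$.

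\item \textbf{From Faber--Krahn to the kernel bound.} The relative Faber--Krahn inequality with the regular volume function $M_0$ gives, by Grigor'yan's equivalence (Faber--Krahn $\Leftrightarrow$ on-diagonal upper bound for volume functions with the doubling and reverse-doubling properties above), $\sup_{x}p(t,x,x)\le C/M_0(\sqrt t)$ for $t\ge1$. The bound of step (1) then yields \eqref{eq:half-line-heat-kernel-upper}.

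\item \textbf{Fallback.} If that black box is unwelcome, I would argue directly. Apply the Nash argument to $u(t)=\|P_t\1_{\{0\}}\|_2^2$, using the Faber--Krahn inequality on superlevel sets to obtain $-u'\ge c\,u/r(u)^2$, and integrate the resulting differential inequality.
\end{enumerate}
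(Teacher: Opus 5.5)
Your proposal is correct, but it reaches \eqref{eq:half-line-heat-kernel-upper} by a different route from the paper.

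\textbf{The paper's route.} The paper checks three things:
\begin{itemize}
\item volume doubling for \emph{all} balls $B(n,r)$;
\item a scale-invariant Poincar\'e inequality on intervals, via the two one-sided weighted Hardy quantities;
\item Delmotte's ellipticity condition $\mu_{nm}\ge\alpha\mu(n)$.
\end{itemize}
It then quotes Delmotte's Gaussian upper bound $P_t(0,n)\le C\mu(n)/\sqrt{V_\mu(0,\sqrt t)V_\mu(n,\sqrt t)}$ and finishes with $V_\mu(n,r)\ge M_0(r)$.

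\textbf{Your route.} You prove a global Faber--Krahn inequality with the anchored profile $\Lambda(v)=c\,(1+M_0^{-1}(v))^{-2}$. This is a global inequality, not a \emph{relative} (ball-localized) one as your wording suggests; the relative version would again require doubling of every ball. You then use Grigor'yan's Faber--Krahn$\Rightarrow$diagonal-bound theorem together with $p(t,0,n)\le p(t,0,0)^{1/2}p(t,n,n)^{1/2}$.

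\textbf{Comparison.} Your route needs doubling only of the single function $M_0$. It needs no Poincar\'e inequality and no ellipticity, and it has an elementary Nash-type proof behind it. The paper's route is a standard black box that also gives off-diagonal Gaussian decay, at the cost of verifying more structure. Both use the monotonicity of $a_n$ in the same two ways:
\begin{itemize}
\item to bound one-dimensional Hardy constants on intervals;
\item to show that an interval away from $0$ carries at least the mass of the initial segment of the same length.
\end{itemize}

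\textbf{The obstacle you flagged.} Bounding $\lambda_1(\Omega)$ through the Muckenhoupt constant over the whole span of $\Omega$ really is too lossy. For $\Omega=\{0,y\}$ and $1<D<\sqrt2$, that constant is of order $y^{2-D}$ up to slowly varying factors, whereas $r(\Omega)^2$ is of order $y^{2(D-1)/D}$. So the componentwise option is the one to use, and it closes cleanly.
\begin{itemize}
\item \emph{Reduction to intervals.} Distinct maximal intervals $I_j$ of $\Omega$ are separated by vertices where $f=0$. Hence the energy splits and $\lambda_1(\Omega)=\min_j\lambda_1(I_j)$. Since $\mu(I_j)\le\mu(\Omega)$ and $\Lambda$ is decreasing, intervals suffice.
\item \emph{Intervals.} Take $I=[x,x+L]$ and apply Hardy from the Dirichlet point $x+L+1$. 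Using $\mu(k)\le2a_k\le2a_n$ for $k\le n$, the Muckenhoupt constant is at most $\sup_n2(n-x+1)(x+L-n+1)\lesssim(L+1)^2$. Also $\mu(I)\ge\mu([0,L])=M_0(L)$, because $\mu(k)$ is increasing. Hence $\lambda_1(I)\ge c(1+r(I))^{-2}$.
\item \emph{Technicalities.} The shift $1+r$ handles singletons, where $\lambda_1=1$ but $r$ may vanish. Since $\mu(\Omega)\ge a_0$ always, $\Lambda$ may be extended below $a_0$ so that $\int_0\dd v/(v\Lambda(v))$ converges. Doubling of $M_0$ then gives $\gamma(t)\ge cM_0(\sqrt t)$ for large $t$; bounded $t$ is trivial since $p(t,x,x)\le1/a_0$.
\end{itemize}

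\textbf{Your fallback.} As written with $\1_{\{0\}}$, the Nash argument bounds only $p(t,0,0)$, which by your own step~(1) is not enough. Run it with $\1_{\{x\}}/\mu(x)$ for every $x$; the uniform Faber--Krahn inequality allows this.
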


\begin{proof}
Adaptedness holds with equality:
\[
        \sum_{m\sim n}\mu_{nm}\rho(n,m)^2=\mu(n).
\]
The metric is proper because its balls are finite intervals.  Since
$a_{n+1}/a_n\to1$, adjacent conductances are uniformly comparable.
In particular,
\[
        \frac{\mu_{n,n+1}}{\mu(n)}
        \quad\text{and}\quad
        \frac{\mu_{n,n+1}}{\mu(n+1)}
\]
are bounded below by a positive constant.  Summation of
\eqref{eq:half-line-conductances} gives
\[
        \sum_{n=0}^{\lfloor r\rfloor}a_n
        \asymp
        r^D(\log r)^\beta(\log\log r)^\gamma,
\]
which proves \eqref{eq:half-line-volume}.

We record the two geometric estimates needed for the heat-kernel bound.  The
measure $\mu$ is uniformly doubling.  If $n\ge4r$, all vertex weights on
$B(n,2r)$ are comparable, and doubling follows as on an ordinary path.  If
$n<4r$, then $B(n,2r)\subset[0,6r]$.  When $n\le r$, the ball $B(n,r)$
contains $[0,r]$; when $r<n<4r$, it contains $[n,n+r]$.  In both cases,
\eqref{eq:half-line-volume} gives
\[
        \mu(B(n,2r))\le C\mu(B(n,r)).
\]
The remaining bounded radii are controlled by the uniform comparability of
adjacent vertex weights.

The scale-invariant Poincar\'e inequality follows from the
one-dimensional weighted Hardy criterion; see \cite{Miclo99}.  For
completeness, let $I=[A,B]\cap\mathbb N_0$ and choose
\[
        m:=\left\lfloor\frac{A+B}{2}\right\rfloor.
\]
Since the weighted average minimizes
$c\mapsto\sum_{n\in I}\mu(n)|f(n)-c|^2$, it suffices to estimate
$\sum_{n\in I}\mu(n)|f(n)-f(m)|^2$.  The two one-sided Hardy
inequalities reduce this estimate, up to a universal factor, to the maximum
of
\begin{equation}\label{eq:half-line-hardy-quantities}
\sup_{A\le j<m}
\mu([A,j])\sum_{k=j}^{m-1}\frac1{a_k},
\qquad
\sup_{m<j\le B}
\mu([j,B])\sum_{k=m}^{j-1}\frac1{a_k}.
\end{equation}
Since $a_k$ is increasing, the first quantity is bounded by
\[
        C\sup_{A\le j<m}(j-A+1)(m-j)
        \le C(B-A+1)^2.
\]
The vertex weights are also increasing.  Since
$m\ge(A+B-1)/2$, regular variation gives $a_B/a_m\le C$, and the second
quantity in \eqref{eq:half-line-hardy-quantities} is bounded by
\[
        C\sup_{m<j\le B}
        (B-j+1)(j-m)\frac{a_B}{a_m}
        \le C(B-A+1)^2.
\]
Taking $I=B(n,r)$, we have $B-A+1\le2r+1$.  Hence, for $r\ge1$,
\[
\sum_{x\in B(n,r)}
|f(x)-f_{B(n,r)}|^2\mu(x)
\le
Cr^2
\sum_{\substack{k,k+1\in B(n,r)}}
a_k|f(k+1)-f(k)|^2.
\]
This is stronger than the Poincar\'e inequality required in
\cite{Delmotte}; the case $r<1$ is trivial.  Thus the volume-doubling
property, the scale-invariant Poincar\'e inequality, and the uniform lower
bound on transition probabilities all hold.

Let
\[
        V_\mu(n,r):=\mu(B(n,r)).
\]
The preceding estimates verify volume doubling, the scale-invariant
Poincar\'e inequality, and Delmotte's edge-ellipticity condition: there exists
$\alpha>0$ such that
\[
        \mu_{nm}\ge\alpha\mu(n)
        \qquad\text{whenever }m\sim n.
\]
Since we work in continuous time, no loop or holding-probability condition is
required; see the discussion following Definition~1.3 in \cite{Delmotte}.
Delmotte's continuous-time upper estimate therefore gives
\[
        P_t(0,n)
        \le
        \frac{C\mu(n)}
        {\sqrt{V_\mu(0,\sqrt t)V_\mu(n,\sqrt t)}},
        \qquad t\ge1,
\]
after discarding the off-diagonal decay factor.

Since the vertex weights $\mu(n)$ are increasing,
\[
        V_\mu(n,r)\ge V_\mu(0,r)=M_0(r)
        \qquad(n\in\mathbb N_0,\ r\ge0).
\]
Indeed, writing $N=\lfloor r\rfloor$, if $n\le N$, then $B(n,r)$
contains $[0,N]$; if $n>N$, then it contains $[n,n+N]$, whose
$\mu$-measure is at least that of $[0,N]$.  Consequently,
\[
        P_t(0,n)
        \le
        \frac{C\mu(n)}{M_0(\sqrt t)}.
\]
Since $P_t(0,n)=p(t,0,n)\mu(n)$ in the present heat-kernel convention,
division by $\mu(n)$ proves \eqref{eq:half-line-heat-kernel-upper}.
\end{proof}

Set
\[
        q_D:=1+\frac2D.
\]
At this polynomially critical exponent, \eqref{eq:half-line-volume} gives
\[
\int_{e^e}^\infty\frac{r}{M_0(r)^{q_D-1}}\,\dd r
\asymp
\int_{e^e}^\infty
\frac{\dd r}{
r(\log r)^{2\beta/D}(\log\log r)^{2\gamma/D}}.
\]
The last integral diverges precisely when
\begin{equation}\label{eq:half-line-divergence-range}
        \beta<\frac D2,
        \qquad\text{or}\qquad
        \beta=\frac D2\ \text{and}\ \gamma\le\frac D2,
\end{equation}
and converges precisely when
\begin{equation}\label{eq:half-line-convergence-range}
        \beta>\frac D2,
        \qquad\text{or}\qquad
        \beta=\frac D2\ \text{and}\ \gamma>\frac D2.
\end{equation}
Accordingly, Theorem~\ref{thm:volume-criterion} gives trace-free
nonexistence, and hence Cauchy nonexistence for every nontrivial nonnegative
initial datum, in \eqref{eq:half-line-divergence-range}.  In
\eqref{eq:half-line-convergence-range},
Proposition~\ref{prop:half-line-heat-kernel} and
Corollary~\ref{cor:heat-kernel-volume-existence} give positive global Cauchy
solutions for all sufficiently small point-source data.

\begin{remark}
Take $D=4$ and $q=q_D=3/2$.  For $\beta=2$,
\[
        M_0(r)\asymp
        r^4(\log r)^2(\log\log r)^\gamma,
\]
and the critical integral is comparable to
\[
        \int_{e^e}^\infty
        \frac{\dd r}{r\log r(\log\log r)^{\gamma/2}}.
\]
Thus $\gamma=2$ gives trace-free nonexistence, while $\gamma=3$ gives
positive global Cauchy solutions for sufficiently small point-source data.
The two examples differ only in the iterated-logarithmic exponent.  Likewise,
with $\gamma=0$, the logarithmic threshold is $\beta=2$: $\beta=2$ gives
nonexistence, whereas, for example, $\beta=3$ gives small-data Cauchy
existence.  Finally,
\[
        \sum_{n=0}^\infty\frac1{a_n}<\infty
\]
for every $\beta,\gamma$ in this $D=4$ family.  Hence all these networks are
transient; the logarithmic transition is not a disguised
recurrence--transience transition.
\end{remark}

\end{document}